\def\indr{\mc{I}^{(r)}(G)}
\def\del{\!\downarrow\!}
\def\A{\mathcal{A}}
\def\B{\mathcal{B}}
\def\C{\mathcal{C}}
\def\I{\mathcal{I}}
\def\M{\mathcal{M}}
\def\N{\mathcal{N}}
\def\mc{\mathcal}
\newtheorem{lemma}{Lemma}[section]
\newtheorem{theorem}{Theorem}[section]
\newtheorem{conjecture}{Conjecture}[section]
\newtheorem{claim}{Claim}
\theoremstyle{definition}
\newtheorem*{remark*}{Remark}
\title{Erd\H{o}s-Ko-Rado Theorems\\ for a Family of Trees}
\author{Carl Feghali, Matthew Johnson and Daniel Thomas \\ School of Engineering and  Computing Sciences, \\ Durham University\\
{\small \texttt{\{carl.feghali,matthew.johnson2,d.j.r.thomas\}@durham.ac.uk}}
}
\date{}
\begin{document}

\maketitle

\begin{abstract}
Given a graph $G$ and an integer $r\geq 1$, let $\I^{(r)}(G)$ denote the family of independent sets of size $r$ of $G$. For a vertex $v$ of $G$, let $\I^{(r)}_v(G)$ denote the family of independent sets of size $r$ that contain~$v$.  This family is called an $r$-star and $v$ is the centre of the star. Then $G$ is said to be $r$-EKR if no pairwise intersecting subfamily of $ \I^{(r)}(G)$ is bigger than the largest $r$-star, and if every maximum size pairwise intersecting subfamily of $ \I^{(r)}(G)$ is an $r$-star, then $G$ is said to be strictly $r$-EKR. Let $\mu(G)$ denote the minimum size of a maximal independent set of $G$. Holroyd and Talbot conjectured that if $2r \leq \mu(G)$, then $G$ is $r$-EKR and strictly $r$-EKR if $2r < \mu(G)$.  

An elongated claw is a tree in which one vertex is designated the root and no vertex other than the root has degree greater than 2.  A depth-two claw is an elongated claw in which every vertex of degree~1 is at distance 2 from the root. We show that  if $G$ is a depth-two claw, then $G$ is strictly $r$-EKR if $2r \leq \mu(G)+1$, confirming the conjecture of Holroyd and Talbot for this family. We also show that if $G $ is an elongated claw with $n$ leaves and at least one leaf adjacent to the root, then $G$ is $r$-EKR if $2r \leq n$.  

Hurlbert and Kamat had conjectured that one can always find a largest $r$-star of a tree whose centre is a leaf.  Baber and Borg have separately shown this to be false.   We show that, moreover, for all $n \geq 2$, $d \geq 3$, there exists a positive integer $r$ such that there is a tree where the centre of the largest $r$-star is a vertex of degree $n$ at distance at least $d$ from every leaf. 
\end{abstract}

\section{Introduction}
In this paper, we consider graph theoretic versions of the following famous result of Erd\H{o}s, Ko and Rado~\cite{erdos};  the characterization of the extremal case was provided by Hilton and Milner~\cite{hilton}.
\bigskip

\noindent {\bf EKR Theorem} (Erd\H{o}s, Ko, Rado~\cite{erdos}; Hilton, Milner~\cite{hilton}) \emph{Let $n$ and $r$ be positive integers, $n \geq r$, let $S$ be a set of size $n$ and let ${\mathcal A}$ be a family of subsets of $S$ each of size $r$ that are pairwise intersecting.   If $n \geq 2r$, then
\[
|\mathcal A| \leq \binom{n-1}{r-1}. 
\]
Moreover, if $n > 2r$ the upper bound is attained only if  the sets in $\A$ contain a fixed element of $S$.}

\bigskip

Throughout this paper, graphs are simple and undirected. Let $K_n$ denote the complete graph on $n$ vertices, and let $K_{1, n}$ denote a claw. An independent set in a graph is a set of pairwise non-adjacent vertices.  Let $\mu(G)$ denote the minimum size of a maximal independent set in $G$. 

Given a graph $G$ and an integer $r\geq 1$, let $\I^{(r)}(G)$ denote the family of independent sets of $G$ of cardinality $r$. For a vertex $v$ of $G$, let $\I^{(r)}_v(G)$ be the subset of $\I^{(r)}(G)$ containing all sets that contain $v$.  This is called an \emph{$r$-star} (or just star) and $v$ is its \emph{centre}.  We say that $G$ is $r$-EKR if no pairwise intersecting family $\A \subseteq \I^{(r)}(G)$ is larger than the biggest $r$-star, and strictly $r$-EKR if every pairwise intersecting family that is not an $r$-star is smaller than the the largest $r$-star of  $\I^{(r)}(G)$.

The EKR Theorem can be seen as a statement about the maximum size of a family of pairwise intersecting independent sets of size $r$ in the empty graph on $n$ vertices.  We quickly obtain another formulation of the EKR Theorem by noting that an independent set of the claw that contains more than one vertex contains only leaves.

\begin{theorem}\label{starekr}
Let $n$ and $r$ be positive integers, $n \geq r$.  The claw $K_{1, n}$ is $r$-EKR if $n \geq 2r$ and strictly $r$-EKR if $n > 2r$.
\end{theorem}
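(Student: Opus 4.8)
The plan is to reduce Theorem~\ref{starekr} to the classical EKR Theorem by understanding the structure of independent sets in the claw $K_{1,n}$. First I would observe, as the excerpt already notes, that any independent set of $K_{1,n}$ containing two or more vertices must consist entirely of leaves, since the only non-leaf vertex is the centre, which is adjacent to every leaf. Consequently, for $r \geq 2$, every member of $\I^{(r)}(K_{1,n})$ is an $r$-subset of the $n$-element leaf set $L$, and conversely every $r$-subset of $L$ is independent. This gives a bijection between $\I^{(r)}(K_{1,n})$ and the family of all $r$-subsets of an $n$-set, so the whole problem transfers verbatim to the EKR setting on the ground set $L$.

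Next I would handle the trivial case $r=1$ separately: every single vertex is independent, any two single-element sets intersect only if equal, so the largest intersecting family has size $1$, which equals the size of any $1$-star, and $n \geq 2r = 2$ suffices for the (non-strict) bound; strictness is vacuous or fails in the expected degenerate way, matching $n > 2r$. For $r \geq 2$, a pairwise intersecting subfamily $\A \subseteq \I^{(r)}(K_{1,n})$ corresponds under the bijection to a pairwise intersecting family of $r$-subsets of $L$, so the EKR Theorem gives $|\A| \leq \binom{n-1}{r-1}$ whenever $n \geq 2r$. It remains to check that this bound is exactly the size of the largest $r$-star. The $r$-star $\I^{(r)}_\ell(K_{1,n})$ centred at a leaf $\ell$ consists of all $r$-subsets of $L$ containing $\ell$, of which there are $\binom{n-1}{r-1}$; an $r$-star centred at the centre vertex is empty for $r \geq 2$ (no independent set of size $\geq 2$ uses the centre). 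Hence the largest $r$-star has size exactly $\binom{n-1}{r-1}$, matching the EKR upper bound, which proves $K_{1,n}$ is $r$-EKR for $n \geq 2r$.

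For the strict statement when $n > 2r$, I would invoke the Hilton--Milner characterization quoted in the EKR Theorem: when $n > 2r$ the bound $\binom{n-1}{r-1}$ is attained only by families in which all sets share a common fixed element of $L$. Translating back, such a family is precisely the star centred at the corresponding leaf, so every maximum intersecting family is an $r$-star, giving strict $r$-EKR.

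I do not anticipate a genuine obstacle here, since the argument is essentially a dictionary translation rather than a new combinatorial insight; the only points requiring care are the boundary cases. In particular I would be careful about the case $r \geq 2$ versus $r = 1$ (where ``intersecting'' degenerates), and about confirming that the centre vertex can never serve as a useful star centre for $r \geq 2$ so that the maximum over all choices of centre is genuinely achieved at a leaf. These are routine verifications rather than substantive difficulties.
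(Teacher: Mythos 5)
Your proposal is correct and is essentially the paper's own argument: the paper derives Theorem~\ref{starekr} from the EKR Theorem (with the Hilton--Milner strictness clause) via exactly your observation that every independent set of $K_{1,n}$ with more than one vertex consists only of leaves, so that for $r \geq 2$ the problem is a verbatim translation to intersecting $r$-subsets of the $n$-element leaf set. Your extra care with the $r=1$ case and with noting that the centre supports no $r$-star for $r \geq 2$ just makes explicit what the paper leaves as a one-line remark.
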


There exist EKR results for several graph classes.  The reader is referred to~\cite{borg2} and the references therein for further details.

The following was conjectured by Holroyd and Talbot~\cite{talbot}. 

\begin{conjecture}[Holroyd, Talbot~\cite{talbot}]\label{c1}
Let $r$ be a positive integer and let $G$ be a graph. Then $G$ is $r$-EKR if $\mu(G) \geq 2r$ and strictly $r$-EKR if $\mu(G) > 2r$. 
\end{conjecture}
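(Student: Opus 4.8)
The plan is to attack Conjecture~\ref{c1} through a compression (shifting) argument modelled on the classical proof of the EKR Theorem, supplemented by a reduction to a minimal class of graphs on which the shifts act freely. Fix $G$ with $\mu(G)\geq 2r$ and a pairwise intersecting family $\A\subseteq\I^{(r)}(G)$; the aim is to transform $\A$ into a single star without decreasing its size, thereby reducing the general bound to the claw case of Theorem~\ref{starekr}, and tracking equality to obtain the strict statement when $\mu(G)>2r$.

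The engine is a graph shift operator. For an ordered pair of vertices $(u,v)$ with $N(u)\subseteq N(v)$, define $S_{uv}(A)=(A\setminus\{v\})\cup\{u\}$ when $v\in A$, $u\notin A$, and the resulting set does not already lie in $\A$, and $S_{uv}(A)=A$ otherwise; set $S_{uv}(\A)=\{S_{uv}(A):A\in\A\}$. The neighbourhood-containment hypothesis is exactly what preserves independence: if $A$ is independent with $v\in A$, then $A\setminus\{v\}$ meets no neighbour of $v$, hence (as $N(u)\subseteq N(v)$) no neighbour of $u$, so $(A\setminus\{v\})\cup\{u\}$ is again independent. One checks in the usual routine way that $|S_{uv}(\A)|=|\A|$ and that $S_{uv}$ preserves the intersecting property. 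First I would iterate these shifts over all admissible pairs until the family is shift-stable.

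Next I would analyse a shift-stable intersecting family directly, hoping that stability forces $\A$ either to lie in one $r$-star or to be strictly smaller than the largest star, mirroring the Hilton--Milner step. Here the hypothesis $\mu(G)\geq 2r$ (respectively $\mu(G)>2r$) should supply the room needed: every maximal independent set has at least $2r$ vertices, so for any candidate non-star configuration one tries to exhibit two disjoint independent $r$-sets, contradicting the intersecting property. A complementary line is a generalised Katona cycle method: cover $\I^{(r)}(G)$ by maximal independent sets of size $\geq 2r$, apply the classical bound to the $r$-subsets inside each, and average; the delicate point is controlling how many such maximal sets contain a given independent $r$-set so that the averaging is tight.

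The main obstacle, and the reason the conjecture remains open, is that a general graph need not possess enough comparable pairs $N(u)\subseteq N(v)$ for the compression to reach a star. Unlike the linearly ordered ground set of the classical theorem, the neighbourhood-containment order on $V(G)$ may be close to an antichain, so $\A$ can be shift-stable while still being a complicated non-star of maximum size. Circumventing this is precisely where extra structure is required, and it is why one retreats to special families such as the depth-two claws and elongated claws treated in this paper, where a distinguished leaf or a canonical root furnishes the ordering, or an induction on the tree, that the general argument lacks. I would therefore expect any full resolution to need a genuinely new idea beyond compression, so that the realistic deliverable is the conjecture for structured graph families rather than for all $G$.
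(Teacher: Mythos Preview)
The statement you are attempting to prove is labelled \emph{Conjecture} in the paper, and for good reason: the paper does not prove it. There is no proof in the paper to compare your proposal against; the conjecture of Holroyd and Talbot is stated as open, and the paper's contribution is to confirm it (indeed, slightly strengthen it) only for the special family of depth-two claws via Theorem~\ref{mainthm}, using a bespoke shadow argument rather than compression.

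Your proposal is, by your own admission in its final paragraph, not a proof either. The compression operator $S_{uv}$ you define is sound as far as it goes---the neighbourhood-containment hypothesis $N(u)\subseteq N(v)$ does guarantee that independence and the intersecting property are preserved---but you correctly identify the fatal gap: in a general graph the partial order given by neighbourhood containment can be nearly trivial, so a shift-stable family need not be a star and the reduction to Theorem~\ref{starekr} never gets off the ground. The Katona-cycle averaging line you mention as a backup has the same well-known obstruction you name: controlling the multiplicity with which an independent $r$-set appears across maximal independent sets is exactly where the argument breaks in general graphs.

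In short, what you have written is a diagnosis of why Conjecture~\ref{c1} is hard, not a proof of it, and the paper makes no claim to have a proof for you to match.
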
 

\noindent  This conjecture appears difficult to prove or disprove. It is nevertheless known to be true for many graph classes such as the disjoint union of complete graphs each of order at least two, powers of paths~\cite{holroyd} and powers of cycles~\cite{talbot1}.  See~\cite{borg,spencer1,spencer2,talbot,kamat} for further examples.

A usual technique to prove results of this kind is to find the centre of the largest $r$-star of a graph and this will prove useful to us.  In this vein, Hurlbert and Kamat~\cite{kamat} conjectured the following for the class of trees. 

\begin{conjecture}[Hurlbert, Kamat~\cite{kamat}]\label{c2}
Let $n$ and $r$ be positive integers, $n \geq r$. If $T$ is a tree on $n$ vertices, then there
is a largest $r$-star of $T$ whose centre is a leaf.
\end{conjecture}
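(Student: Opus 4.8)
The plan is to reduce the statement to a counting inequality and then attempt a compression argument. First I would observe that an $r$-star $\I^{(r)}_v(T)$ is in bijection with the independent sets of size $r-1$ of the graph $T - N[v]$ obtained by deleting $v$ together with its neighbours: each set in the star is $\{v\}$ together with an $(r-1)$-element independent set avoiding $N[v]$. Writing $i_k(H)$ for the number of independent sets of size $k$ in a graph $H$, the goal becomes to show that
\[
\max_{v \in V(T)} i_{r-1}\bigl(T - N[v]\bigr)
\]
is attained at some leaf of $T$. Intuitively this should hold because deleting a leaf removes only two vertices (the leaf and its neighbour), whereas deleting a high-degree vertex removes many, leaving fewer independent sets to count.

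The easy case confirms the intuition cleanly. Let $v$ be the centre of a largest $r$-star and suppose $v$ is adjacent to a leaf $u$. Then $N[u] = \{u, v\} \subseteq N[v]$, so $T - N[v]$ is an induced subgraph of $T - N[u]$ and hence $i_{r-1}(T - N[v]) \le i_{r-1}(T - N[u])$; the leaf $u$ therefore also centres a largest star. So I would be left with the case in which the optimal centre $v$ is an internal vertex with no leaf neighbour.

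For the remaining case the natural strategy is a compression/injection argument. I would choose a leaf $u$ of $T$ with neighbour $w$ — say a leaf at maximum distance from $v$ — and attempt to inject the independent $(r-1)$-sets of $T - N[v]$ into those of $T - N[u] = T - \{u, w\}$. The only obstruction to the trivial inclusion is that a counted set $S \subseteq V(T) \setminus N[v]$ may use $u$ or $w$, which are forbidden for the $u$-star. The first move is to swap any occurrence of $u$ in $S$ for $v$: since $S$ avoids every neighbour of $v$, the set $(S \setminus \{u\}) \cup \{v\}$ is again independent and now avoids $u$. One then hopes to iterate, pushing vertices along the path from $w$ toward $v$, and to verify that the resulting map is injective.

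The hard part, and where I expect the argument to resist completion, is exactly this injectivity together with the handling of sets that also contain $w$. The swap $u \mapsto v$ collides with sets that already contain $v$, and, more seriously, a high-degree internal $v$ lying far from every leaf splits $T - N[v]$ into several comparable branches whose independent sets combine freely, whereas a single leaf $u$ constrains the count only locally near $w$. There is simply more room to assemble large independent families around a central vertex than around a leaf, and I see no tree-structural reason that this richer cross-branch combinatorics must inject into the sparser structure at a leaf. This imbalance suggests that for carefully engineered trees no leaf can match such a centre — precisely the phenomenon that would refute the statement — so rather than force the injection through I would read its failure as a signal to search for a counterexample.
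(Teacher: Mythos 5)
The statement you were asked to prove is a conjecture, not a theorem, and the paper contains no proof of it: Conjecture~\ref{c2} is in fact \emph{false} for every $r \geq 5$, as shown independently by Baber and Borg via the trees $T^{2,k,2}$, and the paper's own Theorem~\ref{centrethm} strengthens the refutation by exhibiting, for any $n \geq 2$ and $d \geq 3$, trees whose largest $r$-star is centred at a vertex of degree $n$ at distance at least $d$ from every leaf. So your final decision --- to read the failure of the injection as a signal to hunt for a counterexample rather than to force the argument through --- is exactly the right call, and your diagnosis of \emph{why} it fails is on target: the obstruction you describe (a high-degree internal centre far from every leaf, whose deletion leaves several comparable branches whose independent sets combine freely, against a leaf that constrains the count only locally) is precisely the structure of the paper's counterexamples $T^{n,k,a}$, a root of degree $n$ joined to $n$ disjoint $(k,a)$-claws. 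There, Lemma~\ref{lm:ce} shows the ratio of root-containing to leaf-containing independent sets tends to $\bigl(F(a-1)+F(a-2)\bigr)/\bigl(2F(a-2)\bigr) > 1$ as $k \to \infty$, whence some $r$-star at the root strictly beats every $r$-star at a leaf; the paper then checks separately that no other internal vertex can do better than a leaf.

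Your partial arguments are also sound as far as they go, and align with what is actually true. The easy case is correct: if the optimal centre $v$ has a leaf neighbour $u$, then $N[u] \subseteq N[v]$, so $T - N[v]$ is an induced subgraph of $T - N[u]$ and the leaf $u$ centres a star at least as large --- this is why every counterexample must have its heavy centre away from all leaves, consistent with the paper's remark that Baber's and Borg's examples use limbs of length $2$. Moreover, the compression you sketch --- swapping $u$ for $v$ and then pushing vertices along the path from $w$ toward $v$ --- is essentially the shift the paper does carry out in Lemma~\ref{starlm} to prove the conjecture for \emph{elongated claws}, where the ambient structure between the leaf and the centre is a single limb; it is exactly in general trees, where $T - N[v]$ fragments into many branches, that no such injection can exist. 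Two caveats for completeness: the conjecture is a theorem for $r \leq 4$ (Hurlbert and Kamat), so a full resolution must engage with small $r$ separately; and your intuition that deleting a high-degree vertex ``removes many vertices, leaving fewer independent sets'' should not be over-trusted --- it is the \emph{lengths} of the limbs and the multiplicity $k$ of branches, not degree alone, that tip the balance in the counterexamples.
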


\noindent They were able to prove Conjecture~\ref{c2} for $1 \leq r \leq 4$~\cite{kamat}. The conjecture does not, however, hold for any $r\geq 5$. This was shown independently by Baber~\cite{baber} and Borg~\cite{borg3} who gave counterexamples in which the largest $r$-star is centred at a vertex whose degree is $2$.  Moreover, mindful of Conjecture~\ref{c1}, we remark that in each of their counterexamples $G$, the value of $r$ does not  exceed~$\mu(G) / 2$.

\subsection{Results} 

We consider a family of trees called \emph{elongated claws}.  An elongated claw has one vertex that is its \emph{root}.  Every other vertex has degree 1 or 2 (it is possible that the root also has degree 1 or 2).  A vertex of degree 1 is called a \emph{leaf}.  A path from the root to a leaf is a \emph{limb}.  A limb is \emph{short} if it contains only one edge.  If every leaf is distance at 2 from the root (that is, if every limb contains two edges), then the graph is a \emph{depth-two claw}.  

We are now ready to state our main results. 
\begin{theorem}\label{mainthm}
Let $r$ be a positive integer and let $G$ be a depth-two claw. Then $G$ is strictly $r$-EKR if $\mu(G) \geq 2r-1$.
\end{theorem}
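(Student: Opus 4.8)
The plan is to work with the explicit structure of a depth-two claw: writing $\rho$ for the root, the graph consists of $k$ limbs $\rho - m_i - \ell_i$ ($1\le i\le k$), where the $m_i$ are middle vertices and the $\ell_i$ leaves. First I would record the easy structural facts: a maximal independent set either contains $\rho$ (forcing it to be $\{\rho,\ell_1,\dots,\ell_k\}$) or contains exactly one vertex of each limb, so $\mu(G)=k$ and the hypothesis reads $k\ge 2r-1$; the case $r=1$ being trivial, I assume $r\ge 2$, so $k>r$. Every member of $\indr$ is then of one of two types: a \emph{root set} $\{\rho\}\cup L$ with $L$ a set of $r-1$ leaves, or a \emph{non-root set} that selects $r$ limbs and one of $\{m_i,\ell_i\}$ from each. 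Counting within each vertex-orbit, I would show the leaf, root and middle $r$-stars have sizes $\binom{k-1}{r-1}2^{r-1}+\binom{k-1}{r-2}$, $\binom{k}{r-1}$ and $\binom{k-1}{r-1}2^{r-1}$, and that the leaf star is strictly largest, the differences being $(2^{r-1}-1)\binom{k-1}{r-1}>0$ and $\binom{k-1}{r-2}>0$. This pins down the conjectured extremal family $\S:=\I^{(r)}_{\ell_1}(G)$ and, incidentally, reconfirms Conjecture~\ref{c2} for this family.

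Given an intersecting $\A\subseteq\indr$, I would split it as $\A=\A_R\sqcup\A_N$ into root and non-root sets. The non-root sets are exactly the independent $r$-sets of $G-\rho$, a disjoint union of $k$ edges; equivalently they are $2$-coloured $r$-sets, so the Erd\H{o}s--Ko--Rado theorem for signed sets gives $|\A_N|\le 2^{r-1}\binom{k-1}{r-1}$, attained strictly only by stars once $k>r$. Rewriting the cross-intersection condition, a root set $\{\rho\}\cup L$ meets a non-root set precisely when $L$ meets its \emph{leaf-support} (the limbs on which that set chose a leaf). The structural dividend of $k\ge 2r-1$ now appears: if $\A_N$ is a full leaf-star at limb $j$, it contains a member whose leaf-support is the single limb $\{j\}$, forcing every $L$ to contain $j$, so $\A_R$ is confined to the $\binom{k-1}{r-2}$ root sets through $\ell_j$ and $|\A|\le|\S|$, with equality exactly when $\A$ is the leaf star at $\ell_j$.

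The essential difficulty is the complementary regime, where $\A_N$ is far from a star: then $\A_R$ can be almost all $\binom{k}{r-1}$ root sets, and bounding $|\A_R|$ and $|\A_N|$ separately is too lossy, since a member of $\A_N$ with a large leaf-support imposes only a weak covering constraint on $\A_R$. To control this trade-off I would pivot on a single limb and induct on $k$, with base case $k=2r-1$. Fixing limb $1$, partition $\A$ by whether a set contains $\ell_1$, contains $m_1$, or avoids both; the last part is an intersecting family in the depth-two claw $G'=G-\{m_1,\ell_1\}$ on $k-1$ limbs, while the first two, after deleting $\ell_1$ respectively $m_1$, become $(r-1)$-sets of $G'$ and of $G-N[m_1]=\bigsqcup_{i\ge 2}K_2$. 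Since $\S$ is in bijection with $\I^{(r-1)}(G')$, the target inequality becomes $|\A_1'|+|\A_2'|+|\A_3|\le|\I^{(r-1)}(G')|$, where $\A_3$ is intersecting and the three families are pairwise cross-intersecting. This reduces the theorem to a cross-intersecting statement one step smaller in both $k$ and $r$, which I would close by induction together with a cross-intersecting EKR lemma for these graphs.

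Throughout, the boundary value $k=2r-1$ is the most delicate point: it is exactly where the single-limb argument of the second paragraph still bites, since placing two disjoint $(r-1)$-sets of limbs needs $k-1\ge 2(r-1)$, and it is where strictness must be tracked most carefully. At each step I would carry the equality analysis, using the strict signed-set EKR (valid precisely because $k>r$) to force $\A_N$ to be a genuine star, and the covering and cross-intersection constraints to force $\A_R$ to complete it into a leaf star. I expect this cross-intersecting induction — and in particular ruling out mixed extremal configurations in the boundary case — to be the main obstacle.
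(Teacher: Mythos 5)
Your setup is correct and initially parallels the paper: the partition of $\A$ into root sets and non-root sets is exactly the paper's $\C$ and $\B$, the bound $|\A_N|\leq 2^{r-1}\binom{k-1}{r-1}$ via EKR for signed sets is the paper's inequality~(\ref{ez}) (cited there as Meyer and Deza--Frankl for $k$ disjoint copies of $K_2$), your three star sizes match Lemma~\ref{mainstar}, and your observation that a full leaf star in $G-\rho$ confines $\A_R$ to the $\binom{k-1}{r-2}$ root sets through that leaf is sound. But the theorem is not proved, because the regime you yourself flag as ``the essential difficulty'' --- $\A_N$ sub-maximal and far from a star while $\A_R$ grows towards $\binom{k}{r-1}$ --- is never resolved. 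Strictness in the signed-set EKR theorem only bites at equality, so nothing in your argument excludes a configuration with $|\A_N|=2^{r-1}\binom{k-1}{r-1}-t$ and $|\A_R|>\binom{k-1}{r-2}+t$ for some $t>0$. Your proposed fix, a limb-pivot induction reducing to a mixed cross-intersecting statement about $(\A_1',\A_2',\A_3)$, is deferred to an unproven ``cross-intersecting EKR lemma,'' and it visibly breaks at your own base case: when $k=2r-1$, the family $\A_3$ consists of $r$-sets in a depth-two claw with $k-1=2r-2$ limbs, below the threshold $2r-1$, so the induction hypothesis is unavailable exactly where you concede the argument is most delicate. Such a lemma cannot be routine, since the theorem itself fails at $k=r=n$ (see the remark following the paper's proof), so any inductive scheme must genuinely consume the hypothesis $\mu(G)\geq 2r-1$ somewhere.

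The paper closes precisely this gap with a single non-inductive idea absent from your sketch: a quantitative trade-off between the two parts, proved with shadows. For each non-root set $B$ it records the set $M_B$ of $r$ leaves of the limbs that $B$ touches; each signature $M\in\M$ represents at most $2^{r-1}$ members of $\B$ (complementary-pairs argument), so $|\B|\leq 2^{r-1}|\M|$, and equality in the signed-set bound already forces $|\M|\geq\binom{k-1}{r-1}$. Katona's shadow lemma (Lemma~\ref{shadow}), applied to the complements $N_B$ --- sets of $k-r$ untouched leaves, any two of which share at least $k-(2r-1)\geq 0$ elements, which is exactly where $\mu(G)\geq 2r-1$ enters --- gives $|\M|=|\N|\leq|\partial_{r-1}\N|$; since the $(r-1)$ leaves of any root set must meet every $B\in\B$, they cannot form a member of the shadow, whence $|\C|\leq\binom{k}{r-1}-|\M|$. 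Summing the two bounds, the $|\M|$ terms cancel to yield exactly the leaf-star size, with strictness inherited from the strict part of Meyer/Deza--Frankl. This is the inequality your cross-intersecting lemma would have to reproduce; as it stands, your proposal proves only the easy extremal case and leaves the core of the theorem open.
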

\noindent  Theorem~\ref{mainthm} confirms (and is stronger than) Conjecture~\ref{c1} for depth-two claws. 

\begin{theorem}\label{starthm}
Let $n$ and $r$ be positive integers, $n \geq 2r$, and let $G$ be an elongated claw with $n$ leaves and a short limb.  Then $G$ is $r$-EKR.
\end{theorem}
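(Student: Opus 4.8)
The plan is to use the short limb to single out a canonical centre. Write $v$ for the root of $G$ and let $u$ be the leaf at the end of a short limb, so that $u$ is a leaf whose unique neighbour is $v$. The key structural fact is that the closed neighbourhood $N[u]=\{u,v\}$ is contained in $N[v]$, because $v$ is adjacent to $u$ and to the first vertex of every other limb. A leaf at the end of a \emph{long} limb has no such property, and this is precisely where the hypothesis of a short limb is used. I would therefore prove the sharper statement that $\I^{(r)}_u(G)$ is a largest intersecting subfamily of $\I^{(r)}(G)$; since $G-u-v$ is exactly $G$ with the closed neighbourhood of $u$ deleted, writing $H:=G-u-v$ we have $|\I^{(r)}_u(G)|=|\I^{(r-1)}(H)|$, and $H$ is a disjoint union of paths (the limbs with their root-ends removed, the chosen short limb disappearing and any further short limbs contributing isolated vertices).

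First I would fix an intersecting family $\A\subseteq\I^{(r)}(G)$ and split it according to how its members meet $\{u,v\}$: no set contains both, so $\A=\A_u\cup\A_v\cup\A_0$, the members containing $u$, containing $v$, and containing neither. The family $\A_u$ is a subfamily of the target star. To handle $\A_v$ I would apply the standard compression that replaces $v$ by $u$, sending $A\mapsto(A\setminus\{v\})\cup\{u\}$: the containment $N[u]\subseteq N[v]$ guarantees that every image is again independent and that this operation neither destroys the intersecting property nor decreases $|\A|$. Passing to a compressed family and carrying out the routine bookkeeping, the problem reduces to bounding the members meeting $H$, that is, to a cross-intersecting statement inside the path-union $H$: writing each member of $\A_u$ as $\{u\}\cup A'$ with $A'\in\I^{(r-1)}(H)$, the intersecting hypothesis forces the $(r-1)$-sets $\{A'\}$ and the $r$-sets in $\A_0\subseteq\I^{(r)}(H)$ to be cross-intersecting, with $\A_0$ itself intersecting. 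The goal becomes the inequality $|\{A'\}|+|\A_0|\le|\I^{(r-1)}(H)|$.

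The heart of the argument, and the step I expect to be the main obstacle, is this cross-intersecting inequality on the disjoint union of paths $H$: an intersecting family of $r$-sets together with an $(r-1)$-set family cross-intersecting with it cannot exceed $|\I^{(r-1)}(H)|$. I would prove it by induction on the number of vertices of $H$, peeling a leaf $\ell$ of a longest path and separating independent sets according to whether they use $\ell$, its neighbour, or neither, so as to reduce to smaller disjoint unions of paths; the base case, in which every path has collapsed and $H$ is edgeless, is the claw and is governed by Theorem~\ref{starekr}. This is exactly where the hypothesis $n\ge 2r$ is consumed: the $n$ leaves of $G$ correspond to the $n-1$ components of $H$ together with the removed short limb, so $n\ge 2r$ forces $H$ to have at least $2r-1$ components and hence $\mu(H)\ge 2r-1\ge 2(r-1)$, which is the room needed to keep the inductive reductions valid and to let the claw bound dominate, mirroring the threshold $n\ge 2r$ in the EKR Theorem itself. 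Two further verifications remain routine: that the compression genuinely preserves intersection and cardinality (the standard check from $N[u]\subseteq N[v]$), and that, once the cross-intersecting bound is in hand, $\I^{(r)}_u(G)$ is indeed a largest star, completing the proof that $G$ is $r$-EKR.
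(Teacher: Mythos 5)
Your overall strategy---compress along the root edge $uv$ using $N[u]\subseteq N[v]$ and reduce everything to a cross-intersecting inequality on the disjoint union of paths $H=G-u-v$---is genuinely different from the paper's proof, which never leaves the class of elongated claws with a short limb: there one compresses at a leaf $v$ of a \emph{long} limb toward its neighbour $w$ (guarded by the condition $z\notin A$), runs a double induction on $r$ and on the number of vertices, and stitches the bounds for $G-v$ and $G\del v$ together via Lemmas~\ref{vlem} and~\ref{lstar}. But your reduction has a genuine gap: compression does \emph{not} empty $\A_v$. The map $A\mapsto (A\setminus\{v\})\cup\{u\}$ fixes any $A\ni v$ whose image already lies in $\A$, and for $r\ge 2$ such a pair intersects (in $r-1\ge 1$ common vertices), so both sets can coexist in an intersecting family. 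Concretely, let $G$ have root $v$, one short limb with leaf $u$, and three limbs of length $2$, and take $r=2$, $n=4$; let $\A=\I^{(2)}_\ell(G)$ be the star at a leaf $\ell$ of a long limb. Then $\{v,\ell\}\in\A$ and its image $\{u,\ell\}\in\A$, so $\A$ is already compressed yet $\A_v\neq\emptyset$; here $|\A|=6=|\I^{(1)}(H)|$ while $|\A_u|+|\A_0|=1+4=5$. Thus the accounting $|\A|\le|\A_u|+|\A_0|$ implicit in your reduction is false, and the inequality you would actually need is $|\F|+|\mathcal{G}|+|\A_0|\le |\I^{(r-1)}(H)|$, where $\F=\{A\setminus\{u\}:A\in\A_u\}$ and $\mathcal{G}=\{A\setminus\{v\}:A\in\A_v\}$ is a family of $(r-1)$-sets of $H$ avoiding every neighbour of $v$. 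The example shows this corrected inequality is tight, so there is no slack with which to absorb $\mathcal{G}$ crudely: the injection $\mathcal{G}\hookrightarrow\F$ only yields $2|\F|+|\A_0|$, which exceeds $|\I^{(r-1)}(H)|$ already for $\A=\I^{(r)}_u(G)$.

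Second, the step you yourself flag as the heart---the cross-intersecting inequality on disjoint unions of paths---is only a plan, and its base case is misattributed. When every path has collapsed, $H$ is edgeless on $m$ vertices and the statement needed is: if $\A_0$ is an intersecting family of $r$-subsets of an $m$-element set and $\F$ is a family of $(r-1)$-subsets cross-intersecting with $\A_0$, then $|\F|+|\A_0|\le\binom{m}{r-1}$. Theorem~\ref{starekr} bounds only $|\A_0|$ by $\binom{m-1}{r-1}$ and says nothing about $|\F|$; what you need is a Frankl/Hilton--Milner-type cross-intersecting lemma, true in the relevant range but requiring its own proof, and further complicated once the extra family $\mathcal{G}$ is carried through the leaf-peeling induction. (A small side error: a leaf $\ell$ of a long limb with neighbour $w$ does satisfy $N[\ell]\subseteq N[w]$, so the closed-neighbourhood containment is not special to the short limb; what is special about $u$ is that deleting $N[u]$ leaves the clean path-union $H$.) None of this says your program cannot be completed, but the paper's route deliberately avoids developing cross-intersecting theory on paths: its Claim~\ref{claim1} shows that both derived families $\B$ and $\C$ remain intersecting inside smaller elongated claws with a short limb, where the induction hypothesis and Lemma~\ref{lstar} apply directly, with the hypothesis $n\ge 2r$ consumed only in the base case $K_{1,n}$.
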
 
 
\noindent  Theorem~\ref{starthm} does not confirm (but only supports) Conjecture~\ref{c1} for the class of elongated claws with short limbs since $\mu(G)$ may be much larger than the number of leaves in $G$. 

We remark that similar EKR results (that is, with weaker bounds than that of Conjecture~\ref{c1}) were obtained in~\cite[Theorem 8]{holroyd} and \cite[Proposition 4.3]{wood}. Satisfying the bound of Conjecture~\ref{c1} in Theorem~\ref{starthm}, and in general for elongated claws, is left as an open problem.  

Our final result relates to the problem of trying to find the centre of largest $r$-stars in trees.  We show that such centres can, in some sense, be located anywhere within a tree.

\begin{theorem} \label{centrethm}
Let $n$ and $d$ be positive integers, $n \geq 2$, $d \geq 3$.  Then there exists a positive integer $r$ such that there is
a tree where the centre of the largest $r$-star is a vertex of degree $n$ and at distance at least $d$ from every leaf.
\end{theorem}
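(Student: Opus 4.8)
The plan is to exhibit, for each pair $(n,d)$ with $n\ge 2$ and $d\ge 3$, an explicit ``spider with heavy hubs'' together with a value of $r$ for which the deepest, highest-degree vertex wins. Fix a large integer $D$ (to be chosen at the end, depending on $n$ and $d$) and build a tree $T=T(n,d,D)$ as follows: take a root $c$; attach to $c$ exactly $n$ \emph{hubs} $a_1,\dots,a_n$; and from each hub $a_i$ grow $D$ disjoint \emph{branches}, each a path of $d-1$ edges whose far endpoint is a leaf. Then $\deg(c)=n$, every leaf lies at distance exactly $d$ from $c$, and each hub has the large degree $D+1$. The heuristic behind the construction is that a vertex $v$ is an attractive star-centre precisely when $N[v]$ is rarely used by large independent sets: each leaf has degree $1$ and so is used by almost every large independent set, making leaves expensive to delete, whereas the neighbours of $c$ are the high-degree hubs, which large independent sets avoid, so deleting $N[c]$ costs almost nothing.

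To make this precise I would work with the independence polynomial $\Phi_H(x)=\sum_k i_k(H)\,x^k$, where $i_k(H)$ is the number of independent sets of size $k$ in $H$, and with the identity $|\I^{(r)}_v(T)|=i_{r-1}(T-N[v])=[x^{r-1}]\Phi_{T-N[v]}(x)$, where $N[v]$ is the closed neighbourhood. Deleting $N[c]$ leaves exactly $nD$ disjoint copies of the path $P_{d-1}$, so $\Phi_{T-N[c]}(x)=\Phi_{P_{d-1}}(x)^{nD}$, a large power of a fixed polynomial, and $\Phi_{P_{d-1}}(1)=F_{d+1}$, the Fibonacci number counting independent sets of $P_{d-1}$. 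By symmetry it suffices to compare $c$ against one representative of each remaining orbit: a hub, a leaf, and each interior branch-vertex. For a competitor $v$ I would compute $\Phi_{T-N[v]}(x)$ by conditioning on $c$ (and on the incident hub) and applying $\Phi_{H}(x)=\Phi_{H-u}(x)+x\,\Phi_{H-N[u]}(x)$; each expression splits into a ``clean'' product that is essentially $\Phi_{P_{d-1}}(x)^{nD}$ plus corrections carrying factors $\Phi_{P_{j}}(x)$ with $j<d-1$. The key quantitative input is that, comparing total weights at $x=1$, deleting $N[v]$ at a leaf (or at either extreme branch-vertex) loses one branch and leaves a shortened stub, giving total weight $\tfrac{2F_{d-1}}{F_{d+1}}\,F_{d+1}^{nD}$, while a hub contributes only an exponentially small $(F_d/F_{d+1})^{D}$-type fraction. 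The inequality $2F_{d-1}<F_{d+1}$, equivalent to $F_{d-1}<F_d$ and hence valid \emph{exactly} when $d\ge 3$, forces every competitor's total weight strictly below $\Phi_{T-N[c]}(1)=F_{d+1}^{nD}$; this is the single place the hypothesis $d\ge 3$ enters, and it explains why depth two (where the bound degenerates to equality) genuinely fails.

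I would then choose $r-1$ to be the mode of $\Phi_{P_{d-1}}(x)^{nD}$. Since this is a large power of a fixed polynomial with positive coefficients, its mode coefficient is only a polynomial-in-$D$ factor below its total $F_{d+1}^{nD}$, so $f(c):=|\I^{(r)}_c(T)|\ge F_{d+1}^{nD}/P(D)$ for an explicit polynomial $P$. Writing $\Phi_{T-N[c]}(x)-\Phi_{T-N[v]}(x)=A_v(x)-R_v(x)$, where $A_v$ is the clean positive part (of total weight of order $F_{d+1}^{nD}$, with mode at the same location to leading order) and $R_v\ge 0$ collects the corrections, the previous step shows $R_v(1)$ is smaller than the total weight of $A_v$ by a factor decaying exponentially in $D$. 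Hence at the chosen coordinate $[x^{r-1}]A_v$ is at least (total of $A_v$)$/P(D)$ while $[x^{r-1}]R_v\le R_v(1)$ is exponentially smaller, so $f(c)-|\I^{(r)}_v(T)|=[x^{r-1}](A_v-R_v)>0$ once $D$ is large. Choosing $D$ large enough that this holds simultaneously for the finitely many competitor orbits completes the proof, with $c$ the unique largest star-centre: a vertex of degree $n$ at distance $d$ from every leaf.

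The main obstacle is this last step: passing from the clean statement about total weights (at $x=1$) to a statement about a single coefficient. The difficulty is twofold. First, one must show that the mode coefficient of $\Phi_{P_{d-1}}(x)^{nD}$ is genuinely only polynomially below the total, which is a local-limit/concentration estimate for powers of a fixed positive polynomial; a crude ``coefficient $\le$ total'' bound is too weak, since each competitor already has total weight only a \emph{constant} factor below $c$. Second, one must verify that a single choice of $r-1$ lies near the mode of $A_v$ for \emph{every} competitor orbit at once, for which I would check that all these polynomials inherit the common leading mode location from $\Phi_{P_{d-1}}(x)^{nD}$, so that the exponential gap coming from $d\ge 3$ comfortably absorbs the bounded discrepancies between their modes.
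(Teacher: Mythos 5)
Your construction is the paper's own: your $T(n,d,D)$ is exactly the tree $T^{n,k,a}$ of Section~4 with $k=D$ limbs of length $a=d-1$, and your key inequality $2F_{d-1}<F_{d+1}$ (valid precisely when $d\geq 3$) is the paper's limiting ratio from Lemma~\ref{lm:ce}, namely $\frac{F(a-1)+F(a-2)}{2F(a-2)}>1$ in the paper's notation where $F(m)$ counts independent sets of $P_m$. Where you diverge is in how a specific $r$ is extracted, and that is where your proposal has a genuine gap. You fix $r-1$ at the mode of $\Phi_{P_{d-1}}(x)^{nD}$ and then need two coefficient-level facts. The first --- that the mode coefficient is within a polynomial factor of the total --- is actually easy, since the polynomial has only $O(D)$ nonzero coefficients. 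The second is the real crux: for every competitor $v$, the coefficient of $\Phi_{T-N[v]}$ \emph{at that same index} must be bounded by a constant strictly less than $1$ times the root's coefficient. As you yourself note, the crude bound ``coefficient $\leq$ total'' fails here, because a leaf's total is only a constant factor below the root's while the root's mode coefficient sits a factor $1/\mathrm{poly}(D)$ below its total; so you genuinely need a local-limit or log-concavity statement aligning the coefficient profiles of products such as $\Phi_{P_{d-3}}\,\Phi_{P_{d-1}}^{nD-1}$ with that of $\Phi_{P_{d-1}}^{nD}$, uniformly over all competitor orbits. You flag this as the main obstacle but never prove it, so as written the argument is a program rather than a proof. (It could very likely be completed --- independence polynomials of paths are real-rooted, so these products have log-concave coefficient sequences and a local central limit theorem applies --- but none of that work is carried out.)

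The paper sidesteps this analytic difficulty entirely with two soft observations that your proposal is missing. First, since the $r$-stars $\mathcal{I}^{(r)}_v(T)$, taken over all $r$, partition the independent sets containing $v$ by size, the total-count inequality between root and leaf (Lemma~\ref{lm:ce}, for $k$ large) already yields \emph{by pigeonhole} some $r$ with the root's $r$-star strictly larger than the leaf's --- no mode needs to be located, and $r$ may remain unspecified. Second, for that unknown $r$ the paper shows that every vertex other than the root and the leaves is dominated by a leaf \emph{uniformly in $r$}, by purely combinatorial means: splitting each star according to whether its sets contain the root, then applying the injection of Lemma~\ref{starlm} inside the elongated-claw components and the degree-one path fact of Lemma~\ref{fa} inside the copies of $P_{d-1}$. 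This replaces your entire hub and interior-vertex analysis, together with its mode-alignment worries, by injections valid for every $r$ simultaneously. If you want to repair your write-up with minimal change, substitute the pigeonhole step for the mode argument and these injections for the analytic competitor bounds.
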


In the remaining sections we prove Theorems~\ref{mainthm}, \ref{starthm} and \ref{centrethm}. 
 
\section{Depth-two Claws} 

In this section, we prove Theorem~\ref{mainthm} after first proving a number of useful results.  This next lemma will also be used in the proof of Theorem~~\ref{starthm}.

\begin{lemma}\label{starlm}
Let $r$ be a positive integer, and let $G$ be an elongated claw.   Then there is a largest $r$-star of $G$ whose centre is a leaf.
\end{lemma}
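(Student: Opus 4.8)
The plan is to count stars by passing to closed neighbourhoods. For any vertex $v$ one has $|\I^{(r)}_v(G)| = |\I^{(r-1)}(G - N[v])|$, since choosing an independent set of size $r$ through $v$ amounts to deleting $v$ together with all its neighbours and then choosing an independent set of size $r-1$ in what remains. Thus it suffices to exhibit a leaf $\ell$ maximizing $|\I^{(r-1)}(G - N[\ell])|$. I would prove the sharper structural statement that for \emph{every} vertex $v$ of $G$ there is a leaf $\ell$ with $G - N[v]$ isomorphic to an induced subgraph of $G - N[\ell]$. Granting this, the lemma follows at once from the elementary monotonicity fact that if $H'$ is isomorphic to an induced subgraph of $H$ then $|\I^{(s)}(H')| \le |\I^{(s)}(H)|$ for every $s$, because an independent set of an induced subgraph is an independent set of the ambient graph of the same size. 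Applying this with $s = r-1$ to the star-maximizing vertex $v$ yields a leaf whose star is at least as large, so the maximum is attained at a leaf.

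To establish the embedding I would argue by cases on the position of $v$, writing the root as $\rho$ and the limbs as paths emanating from it. Deleting $N[\ell]$ for a leaf $\ell$ merely truncates the limb of $\ell$ by its last two vertices and leaves the rest of $G$ intact, so $G - N[\ell]$ is again an elongated claw (a disjoint union of paths when $\ell$ lies on a short limb). Deleting $N[v]$ for an internal degree-$2$ vertex $v = x_i$ of a limb instead splits $G$ into a smaller elongated claw on the root side and a path, namely the outer stub of that limb. For such $v$ I would take $\ell$ to be the leaf of the same limb: the root side embeds into $G - N[\ell]$ by the identity, while the outer stub embeds into the vacated portion of that limb, the single skipped vertex $x_{i-1}$ serving as a gap that keeps the image induced and disconnected as required.

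The genuine obstacle is the highly disconnected cases $v = \rho$ and $v$ adjacent to $\rho$, where deleting $N[v]$ destroys the root and breaks $G$ into several paths. The key device is that the root can be reclaimed as a connector: when we delete $N[\ell]$ for a leaf $\ell$ on a chosen limb, that limb loses two vertices, but the surviving $\rho$ now joins this limb to a second limb into one long path, and a path on $a+b+1$ vertices contains the disjoint union of a path on $a$ vertices and a path on $b$ vertices as an induced subgraph (leave one vertex as a gap). A short computation then shows that the two path-components of $G - N[\rho]$ arising from these two limbs fit exactly into this long path, while the remaining components embed into the untouched limbs with room to spare. I expect this bookkeeping — checking that the one extra vertex gained from the root precisely offsets the two vertices lost to the leaf deletion, uniformly across all limb-length configurations including short limbs — to be the main point requiring care, the internal and leaf cases being comparatively routine.
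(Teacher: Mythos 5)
Your proposal is correct, and it takes a genuinely different route from the paper's. The paper works directly at the level of stars: it constructs an explicit injection $f:\I^{(r)}_v(G)\to\I^{(r)}_x(G)$, where $x$ is the leaf of the limb through $v$, fixing sets that already contain $x$, swapping $v$ for $x$ when neither $x$ nor its neighbour is present, and otherwise shifting the set's intersection with the limb one step towards the leaf --- a compression-style map defined set by set for each fixed $r$. You instead reduce, via the standard bijection $|\I^{(r)}_v(G)|=|\I^{(r-1)}(G\del v)|$ (where $G\del v$ is the paper's notation for deleting the closed neighbourhood), to the structural claim that $G\del v$ is an induced subgraph of $G\del\ell$ for a suitable leaf $\ell$, and this claim does hold in every case: for internal $v=x_i$ your one-vertex gap at $x_{i-1}$ keeps the image induced (and when $v$ is at distance two from the root, that same gap is what severs the outer stub's image from the root); for $v$ adjacent to the root your connector device is not even needed, since taking $\ell$ to be the leaf of $v$'s own limb gives $G\del v\cong(G\del\ell)-\rho$ exactly, i.e.\ your gap lands on the root itself; and for $v=\rho$ the bookkeeping closes precisely as you predicted: choosing $\ell$ on a limb of length $a_t\ge 2$ (if no such limb exists, $G\del\rho$ is empty and there is nothing to prove) and a second limb of length $a_s$, the path (truncated limb $t$)--$\rho$--(limb $s$) has $(a_t-2)+1+a_s$ vertices, exactly enough to host the two displaced components $P_{a_t-1}\sqcup P_{a_s-1}$ with one gap, which must be placed at the root-neighbour of limb $s$; the remaining components $P_{a_j-1}$ each sit in their own limb avoiding the root's neighbour, so no stray adjacency to $\rho$ arises and the image is induced. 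What your approach buys: a single embedding yields $|\I^{(s)}_v(G)|\le|\I^{(s)}_\ell(G)|$ for \emph{all} sizes $s$ simultaneously --- exactly the ``for all $i$'' form of the lemma invoked later in the proof of Theorem~\ref{centrethm} --- and it explains the inequality structurally rather than combinatorially. What the paper's injection buys: brevity and self-containment, with no counting identity and a much lighter case analysis, since the shift always stays inside a single limb and the delicate root-destroying cases never arise.
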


\begin{proof}
Let $v$ be a vertex of $G$ that is not a leaf, and let $L$ be the limb of~$G$ that contains $v$ (if $v$ is the root, then $L$ can be any limb).  Let $x$ be the leaf of $L$.  We find an injection $f$ from $\I^{(r)}_v(G)$ to $\I^{(r)}_x(G)$ which proves that $|\I^{(r)}_x(G)| \geq |\I^{(r)}_v(G)|$ and the lemma immediately follows. 

Let $w$ be the unique neighbour of $x$. Let $A \in \I^{(r)}_v(G)$.  
\begin{enumerate}
\item If $x \in A$, then let $f(A) = A$.
\item If $x \not\in A$ and $w \not\in A$, then let $f(A) = A \backslash\{v\} \cup \{x\}$.
\item If $x \not\in A$ and $w \in A$, then let $X = \{x=x_1, x_2, \dots, x_m = v\}$ be the set of vertices in $L$ from $x$ towards $v$. Let $A \cap X = \{x_{i_1}, \dots, x_{i_j}\} = Y$ for some $m >  j \geq 1$. Let $Z = \{x_{i_1-1}, \dots, x_{i_j-1}\}$. Observe that $|Y| = |Z|$ and $x \in Z$ since $w \in Y$. Then let $f(A) =(A \cup Z)\backslash Y$. 
\end{enumerate}
To prove that $f$ is injective we consider distinct $A_1, A_2 \in \I^{(r)}_v(G)$.  If $f(A_1)$ and $f(A_2)$ are defined by the same case (of the three above), then it is clear that $f(A_1)$ and $f(A_2)$ are distinct.  When they are defined by different cases, we simply note that in the first $f(A)$ always contains $v$, in the second $f(A)$ contains neither $v$ nor any of its neighbours, and in the third $f(A)$ contains a neighbour of $v$.
\end{proof}

We note that Lemma~\ref{starlm} confirms Conjecture~\ref{c2} for elongated claws. 

\begin{remark*}
The property of elongated claws in Lemma~\ref{starlm} is a much weaker version of the \emph{degree sort property}; a graph has this property if the size of an $r$-star centred at $u$ is at least the size of an $r$-star centred at~$v$ whenever the degree of $u$ is less than that of $v$.  Hurlbert and Kamat~\cite{kamat} observed that depth-two claws have this property.  We note that not all elongated claws possess it.  For example, consider an elongated claw with three limbs of lengths 1, 2 and 3.  Then the 4-star centred at the neighbour of the root in the limb of length 3 has size 2, but the 4-star centred at the leaf of the limb of length 2 has size 1.  It remains to determine which elongated claws --- or, more generally, which trees --- have the degree sort property.  We might also ask which trees have the following weaker property: if $i<j$, then the size of the largest $r$-star of all those stars centred at vertices of degree~$i$ is at least  the size of the largest $r$-star of all those centred at vertices of degree~$j$.
\end{remark*}

\begin{lemma}\label{mainstar}
Let $n$ and $r$ be positive integers, $n \geq r$, and let $G$ be a depth-two claw with $n$ leaves. Then the largest $r$-star of $G$ is centred at a leaf and has size
\[
\binom{n-1}{r-1}2^{r-1} + \binom{n-1}{r-2}. 
\]
\end{lemma}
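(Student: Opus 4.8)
The plan is to combine Lemma~\ref{starlm} with the high degree of symmetry of a depth-two claw and then carry out a direct count. By Lemma~\ref{starlm} there is a largest $r$-star whose centre is a leaf, so it suffices to determine the size of a leaf-centred star and to observe that this size is in fact the maximum over all stars. Since every automorphism of $G$ fixing the root permutes the leaves transitively, all $n$ leaves are equivalent; hence every leaf-centred $r$-star has the same size, and the largest $r$-star is centred at a leaf and equals $|\I^{(r)}_{\ell}(G)|$ for any leaf $\ell$.

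First I would fix notation for the structure. Write the root as $\rho$, and for $1 \le i \le n$ let the $i$-th limb be $\rho - m_i - \ell_i$, where $m_i$ is the middle vertex and $\ell_i$ its leaf. The only adjacencies are $\rho \sim m_i$ for all $i$ and $m_i \sim \ell_i$ for all $i$; in particular distinct limbs interact only through $\rho$. Consequently an independent set is specified by whether $\rho$ is chosen and, for each limb, whether one takes $m_i$, takes $\ell_i$, or takes neither, subject to the single global constraint that $\rho$ and any $m_i$ cannot both be chosen.

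Next I would count $\I^{(r)}_{\ell_1}(G)$ by conditioning on the root. Fixing $\ell_1$ in the set forces $m_1$ out. If $\rho$ is in the set, then no middle vertex may appear, so the remaining $r-2$ vertices are leaves chosen freely from $\{\ell_2, \dots, \ell_n\}$, giving $\binom{n-1}{r-2}$ sets. If $\rho$ is not in the set, then the remaining $r-1$ vertices come from limbs $2, \dots, n$, each contributing at most one of its two vertices; choosing which $r-1$ of the $n-1$ limbs contribute, and then which of the two vertices in each, gives $\binom{n-1}{r-1}2^{r-1}$ sets. Summing the two disjoint cases yields the claimed formula.

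The individual steps are routine, so the main thing to get right is the case analysis: one must verify that the two cases are genuinely exhaustive and disjoint, and that in each case the selected vertices really do form an independent set, which rests on the observation that the only edges among $\{m_i, \ell_i\}_{i \ge 2}$ are the within-limb edges $m_i \sim \ell_i$. I expect no serious obstacle beyond this bookkeeping; the real content of the lemma is the localisation of the centre at a leaf, which is already supplied by Lemma~\ref{starlm} together with symmetry.
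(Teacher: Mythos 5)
Your proposal is correct and follows essentially the same route as the paper's proof: invoke Lemma~\ref{starlm} plus leaf-symmetry to reduce to counting a leaf-centred star, then partition that star according to whether the root belongs to the set, obtaining $\binom{n-1}{r-2}$ in the root case and $\binom{n-1}{r-1}2^{r-1}$ otherwise. The only difference is presentational: you spell out the limb notation and the independence bookkeeping more explicitly than the paper does.
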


\begin{proof}
By Lemma~\ref{starlm}, there is a largest $r$-star whose centre is a leaf (and clearly, by symmetry, all leaves are equivalent). So let $v$ be a leaf of $G$ and let $c$ be the root of $G$.  Define a partition: $\B = \{B \in \I_v^{(r)}(G): c \not\in B\}$ and $\C = \{C \in \I_v^{(r)}(G): c \in C\}$. Then $|\B| = \binom{n-1}{r-1}2^{r-1}$ since each member of~$\B$ intersects $r-1$ of the $n-1$ limbs that do not contain $v$ and can contain either of the 2 vertices (other than the root) of each of those limbs. And $|\C| = \binom{n-1}{r-2}$ since each member of $\C$ contains $r-2$ of the $n-1$ leaves other than $v$.  The proof is complete. 
\end{proof}

In order to prove Theorem~\ref{mainthm}, we shall need two auxiliary results. 

\begin{theorem}[Meyer~\cite{meyer}; Deza and Frankl~\cite{deza}]\label{thm:meyer}
Let $n$, $r$ and $t$ be positive integers, $n \geq r$, $t \geq 2$, and let $G$ be the disjoint union of $n$ copies of $K_t$.  Then $G$ is $r$-EKR and strictly $r$-EKR unless $r=n$ and $t=2$. 
\end{theorem}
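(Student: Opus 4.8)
The plan is to work with the natural coordinatisation of $\I^{(r)}(G)$: writing the $n$ cliques as $V_1,\dots,V_n$, each of size $t$, an independent set of size $r$ is exactly a choice of an $r$-subset $S\subseteq\{1,\dots,n\}$ of cliques together with one representative vertex from each $V_i$ with $i\in S$, so $|\I^{(r)}(G)|=\binom{n}{r}t^r$ and two independent sets meet precisely when they share a clique on which they choose the same representative. Fixing a vertex $v\in V_1$, the star $\I^{(r)}_v(G)$ consists of those sets that use clique $1$ with representative $v$, giving $|\I^{(r)}_v(G)|=\binom{n-1}{r-1}t^{r-1}$; this is the quantity I must prove is extremal. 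It is worth noting at the outset where the exception comes from: when $r=n$ every independent set uses all $n$ cliques and so is a pure choice function, two such sets are disjoint iff they disagree on every clique, and for $t=2$ this ``disjointness'' relation is a perfect matching on the $2^n$ sets, so any transversal of the matching (not only a star) is a maximum intersecting family. Ruling this phenomenon in, and everything else out, is the organising principle of the proof.

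For the inequality itself, when $2r\le n$ I would use Katona's cyclic-permutation method adapted to the blow-up. I average over all $n!$ arrangements of the cliques around a labelled cycle; a fixed $r$-subset of cliques is consecutive in exactly $n\,r!\,(n-r)!$ of them, independently of the representatives chosen. The whole argument then reduces to a one-dimensional ``arc lemma'': in any single cyclic arrangement, the largest intersecting family among those independent sets whose cliques form an arc of $r$ consecutive cliques has size $r\,t^{r-1}$, which the star at one vertex already attains. Feeding $M=r\,t^{r-1}$ into the double count $|\A|\cdot n\,r!\,(n-r)!\le n!\cdot M$ yields exactly $|\A|\le\binom{n-1}{r-1}t^{r-1}$.

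The remaining range $2r>n$ (which includes $r=n$) cannot be handled by the arc lemma, since arcs of length $r$ then overlap automatically and the extremal arc-family is larger; here I would instead induct on $n$ via a link/deletion decomposition. Splitting $\A$ according to behaviour on $V_n$ into the subfamily $\A_0$ of sets avoiding $V_n$ and the links $\A_u=\{A\setminus\{u\}: A\in\A,\ u\in A\}$ for $u\in V_n$, every piece is a family of independent sets of the disjoint union of $n-1$ cliques; $\A_0$ is intersecting and cross-intersecting with each $\A_u$, while $\A_u$ and $\A_{u'}$ are cross-intersecting for $u\ne u'$ (though an individual link need not be intersecting). The inductive hypothesis must therefore be a cross-intersecting strengthening of the statement; summing the resulting bounds recovers $\binom{n-1}{r-1}t^{r-1}$ in every regime. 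Strictness is then obtained by tracking equality throughout: an extremal family forces equality in every averaged instance, respectively in every inductive step, and I would show that the only way to have a maximum intersecting family that is not a star is for this forcing to degenerate, which happens exactly when $r=n$ and $t=2$.

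The main obstacle is the arc lemma together with its equality analysis (equivalently, the base structural lemma in the inductive version): proving that the maximum intersecting arc-family has size exactly $r\,t^{r-1}$ is itself a genuine EKR-type problem, and the uniqueness half --- showing that every maximum intersecting family is a star in all cases except $(r,t)=(n,2)$ --- is the delicate, Hilton--Milner-flavoured part, since one must rule out ``spread-out'' maxima that share no common vertex. Correctly isolating the single exceptional pair, and no other, is where the hypotheses $t\ge 2$ and $n\ge r$ are pushed to their limit.
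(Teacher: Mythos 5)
You should first be aware that the paper does not prove this statement at all: Theorem~\ref{thm:meyer} is imported as a black box from Meyer and from Deza and Frankl, and is used only as an ingredient in the proof of Theorem~\ref{mainthm}. So there is no internal proof to match; the relevant comparison is with the literature, where your plan closely parallels the known arguments (the cycle-method proof of the EKR theorem for signed sets due to Bollob\'as and Leader, and, for $r=n$, Livingston's theorem on intersecting families of choice functions). Your setup is sound: the coordinatisation, the star size $\binom{n-1}{r-1}t^{r-1}$, the identification of the $r=n$, $t=2$ exception via the perfect matching on the $2^n$ choice functions (this matches the paper's own remark after Theorem~\ref{mainthm}), and the double count $|\A|\cdot n\,r!\,(n-r)! \leq n!\cdot M$ giving $\binom{n-1}{r-1}t^{r-1}$ from $M=r\,t^{r-1}$ are all correct.

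The genuine gap is that the entire content of the theorem has been displaced into two statements you never prove. First, the ``arc lemma'' is not a routine transcription of Katona's argument: pairwise intersecting $r$-arcs on the cycle need not share a common position (staircase configurations exist), so with signs added you must control the trade-off between families concentrated on few arcs with many signings --- a single arc already carries $t^{r-1}$ pairwise-agreeing signings when $t\geq 3$, and $2^{r-1}$ when $t=2$ --- and families spread over up to $r$ arcs with constrained signings; bounding this maximum by $r\,t^{r-1}$, and characterising equality, is an EKR problem of essentially the same nature as the theorem itself, as you concede. Second, in the regime $2r>n$ your links $\A_u$ are only pairwise cross-intersecting, not intersecting, so the induction requires a quantitative cross-intersecting theorem for signed sets bounding $|\A_0|+\sum_u|\A_u|$; you neither formulate nor prove it, and ``summing the resulting bounds recovers $\binom{n-1}{r-1}t^{r-1}$ in every regime'' is an assertion, not a derivation. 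Finally, for strictness, equality in the averaging step only forces each cyclic instance to be locally extremal; upgrading this to the global conclusion that $\A$ is a star, with $(r,t)=(n,2)$ as the sole exception, is the Hilton--Milner-flavoured step you flag but do not carry out. As it stands this is a credible programme consistent with how the theorem is actually proved in the cited sources, but it is not a proof.
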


For a family of sets $\A$ and nonnegative integer $s$, the $s$-shadow of $\A$, denoted $\partial_s\A$, is the family $\partial_s\A = \{S: |S| = s, \exists A \in \A, S \subseteq A\}$. 

\begin{lemma}[Katona~\cite{katona}]\label{shadow}
Let $a$ and $b$ be nonnegative integers and let $\A$ be a family of sets of size $a$ such that $|A \cap A'| \geq b \geq 0$ for all $A, A' \in \A$. Then $|\A| \leq |\partial_{a-b}\A|$
\end{lemma}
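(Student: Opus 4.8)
The plan is to prove the inequality by the method of \emph{compressions} (left-shifting), reducing to a structurally simple family and then arguing by induction. Fix a linear order on the ground set and, for $i<j$, let $S_{ij}$ be the usual left-compression that replaces each $A\in\A$ containing $j$ but not $i$ by $(A\setminus\{j\})\cup\{i\}$ whenever the latter is not already present. First I would record three standard facts about a single compression: it preserves the cardinality of the family, $|S_{ij}\A|=|\A|$; it preserves the hypothesis, so $S_{ij}\A$ is again a family of $a$-sets any two of which meet in at least $b$ elements; and it does not increase the shadow, $|\partial_{a-b}(S_{ij}\A)|\le|\partial_{a-b}\A|$. The first is immediate, the second is a short case analysis on whether the two sets being compared contain $i$ or $j$, and the third is the classical observation that compressions never enlarge a shadow. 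Applying compressions repeatedly terminates in a \emph{left-compressed} family $\A^\ast$ with $|\A^\ast|=|\A|$, still $b$-intersecting, and with $|\partial_{a-b}\A^\ast|\le|\partial_{a-b}\A|$. Hence it suffices to prove $|\A^\ast|\le|\partial_{a-b}\A^\ast|$, so the whole problem reduces to left-compressed families.

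For a left-compressed family I would induct on the number $n$ of ground-set elements, splitting on the least element $1$. Write $\A_0=\{A\in\A : 1\notin A\}$ and $\A_1=\{A\setminus\{1\} : 1\in A\in\A\}$. Then $\A_0$ is a $b$-intersecting family of $a$-sets, while $\A_1$ is a family of $(a-1)$-sets that is only $(b-1)$-intersecting, since $1$ lay in the intersection of any two of its members; both live on a ground set of size $n-1$. The base cases are clean: if $b=0$ then $\partial_{a-b}\A=\A$ and the claim is trivial, and if $a=b$ then all members of $\A$ coincide, so $|\A|\le 1=|\partial_0\A|$. For the inductive step, decompose the shadow $\D=\partial_{a-b}\A$ as $\D_0=\{S\in\D:1\notin S\}$ and $\D_1=\{S\setminus\{1\}:1\in S\in\D\}$, so that $|\D|=|\D_0|+|\D_1|$. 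The contribution of $\A_0$ is controlled immediately, since $\D_0\supseteq\partial_{a-b}\A_0$ and the induction hypothesis gives $|\partial_{a-b}\A_0|\ge|\A_0|$.

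The main obstacle is the term coming from the sets through $1$. Here $\D_1$ equals the deeper shadow $\partial_{a-1-b}\A_1$, whereas the induction hypothesis applied to the $(b-1)$-intersecting family $\A_1$ only yields a lower bound on the one-level-shallower shadow $\partial_{a-b}\A_1$; since a deeper shadow need not be larger, these do not match directly. To close this gap I would exploit that the shadow of a left-compressed family is again left-compressed, which forces extra $(a-b)$-sets through $1$ and supplies the missing shadow level --- concretely, by combining a relation of the form $\partial(\D_0)\subseteq\D_1$ with the inductive bounds so as to conclude $|\D_1|\ge|\A_1|$, whence $|\D|=|\D_0|+|\D_1|\ge|\A_0|+|\A_1|=|\A|$. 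Making this depth reconciliation precise, and verifying the required inclusions for compressed families, is where the real work lies; everything else is bookkeeping.
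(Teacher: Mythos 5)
The paper itself does not prove this lemma --- it quotes it from Katona and uses it as a black box --- so the only question is whether your argument stands on its own. Your compression preliminaries are fine: $S_{ij}$ preserves the size of the family and the $b$-intersecting property, and does not increase any iterated shadow, so the reduction to shifted families is sound. The genuine gap is in the induction, exactly where you say ``the real work lies'', and it is not repairable as stated. Having split a shifted family at the \emph{smallest} element, your plan is to prove $|\D_1|\ge|\A_1|$ and conclude by the per-part matching $|\D|=|\D_0|+|\D_1|\ge|\A_0|+|\A_1|$. But $|\D_1|\ge|\A_1|$ is false. Take $a=2$, $b=1$, and $\A=\{\{1,j\}:2\le j\le n\}$: this is shifted and $1$-intersecting; $\D=\partial_1\A$ is the set of all $n$ singletons, so $\D_1=\{\emptyset\}$ has size $1$, while $|\A_1|=n-1$. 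Your inclusion $\partial(\D_0)\subseteq\D_1$ does hold for shifted families (replace some $x\in A\setminus T$ by $1$ using shiftedness), but in this example $\partial(\D_0)=\{\emptyset\}$, so it cannot supply the missing shadow level. Here the theorem survives only because $\D_0$ carries all the surplus ($|\D_0|=n-1$ while $\A_0=\emptyset$), which shows that no accounting that matches $\D_0$ against $\A_0$ and $\D_1$ against $\A_1$ can work with the smallest-element split.

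The classical compression proof fixes this by splitting at the \emph{largest} element $n$ instead, where shiftedness buys the key upgrade you are missing: if $n>2a-b$ and $A,B\in\A$ both contain $n$ with $|A\cap B|=b$, pick $x\notin A\cup B$ (possible since $|A\cup B|=2a-b<n$) and note $(A\setminus\{n\})\cup\{x\}\in\A$ meets $B$ in only $b-1$ elements, a contradiction; hence $\A_1=\{A\setminus\{n\}:n\in A\in\A\}$ is still fully $b$-intersecting, not merely $(b-1)$-intersecting. The induction hypothesis then applies to the $(a-1)$-sets of $\A_1$ at exactly the right depth, since $(a-1)-b=a-b-1$, and $\partial_{a-b-1}\A_1$ injects into $\D_1$ by adding $n$ back, while $\partial_{a-b}\A_0\subseteq\D_0$; summing gives $|\D|\ge|\A_0|+|\A_1|=|\A|$. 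The base case $n\le 2a-b$ needs neither shifting nor the intersecting hypothesis: counting containments gives $|\A|\binom{a}{b}\le|\partial_{a-b}\A|\binom{n-a+b}{b}$, and $n\le 2a-b$ forces $\binom{n-a+b}{b}\le\binom{a}{b}$. So your overall architecture (compress, then induct by splitting at an extreme element) is the right one, but the extreme element must be the largest, and the missing idea is the intersection-parameter upgrade of the top link rather than any shadow relation between $\D_0$ and $\D_1$.
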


\noindent The proof of Theorem~\ref{mainthm} is inspired by a proof of the EKR theorem~\cite{frankl}. To the best of our knowledge, the proof is the first to make use of shadows in the context of graphs.

\begin{proof}[Proof of Theorem~\ref{mainthm}]
Let $c$ be the root of $G$ and let $n$ be the number of leaves of $G$. Note that $n = \mu(G)$ so $n \geq 2r - 1$.  Let $\A \subseteq I^{(r)}(G)$ be any pairwise intersecting family. Define a partition $\B = \{A \in \A: c \not\in A\}$ and $\C = \{A \in \A: c \in A\}$. 

Notice that each vertex in each member of $\B$ is either a leaf or the neighbour of a leaf.  For $B \in \B$, let $M_B$ be the set of $r$ leaves that each either belongs to $B$ or is adjacent to a vertex in $B$.  We say that $M_B$ \emph{represents} $B$.   Let $\M = \{M_B : B \in \B\}$.  Note that each member of $\M$ might represent many different members of $\B$.  In fact, consider $M \in \M$.  It can represent any independent set that, for each leaf $\ell \in M$, contains either $\ell$ or its unique neighbour.  There are $2^r$ such sets but they can be partitioned into complementary pairs so, as $\B$ is pairwise intersecting, the number $s_M$ of members of $\B$ that $M$ represents is at most $2^{r-1}$.  We also note that $\M$ is pairwise intersecting (since $\B$ is pairwise intersecting).  

We have that 
\begin{equation} \label{ez}
|\B| = \sum_{M \in \M}{s_M} \leq \binom{n-1}{r-1}2^{r-1},
\end{equation}
where the inequality follows from Theorem~\ref{thm:meyer}. 

For $B \in \B$, let $N_B$ be the set of $n-r$ leaves that neither belong to $B$ nor are adjacent to a vertex in $B$.  Notice that $M_B$ and $N_B$ partition the set of leaves. Let $\N = \{N_B : B \in \B\}$.  For any pair $B_1, B_2 \in \B$, we know that $M_{B_1}$ and $M_{B_2}$ intersect, so $|M_{B_1} \cup M_{B_2}| \leq 2r -1$.  The leaves not in this union are members of both $N_{B_1}$ and $N_{B_2}$
 and there are at least $n-(2r-1) \geq 0$ of them. 
 Thus we can apply Lemma~\ref{shadow} to~$\N$ with $a=n-r$, $b=n-(2r-1)$ to obtain
\begin{equation}\label{e2}
|\N| \leq |\partial_{r-1} \N|. 
\end{equation}
Notice that, by definition, $\partial_{r-1} \N$ is a collection of sets of $r-1$ leaves each of which is, for some $B \in \B$, a subset of $N_B$ and is therefore disjoint to $M_B$ and so certainly does not intersect $B$.

Let us try to bound the size of $\C$.  Each $C \in \C$ contains a distinct set of~$r-1$ leaves.  We know this set must intersect every member of $\B$ so it cannot be a member $\partial_{r-1} \N$.  Thus we find
\begin{equation}\label{ec1}
|\C| \leq \binom{n}{r-1} - |\partial_{r-1} {\N}|. 
\end{equation}
We apply (\ref{e2}) to (\ref{ec1}) and note that $|\N| = |\M|$ to obtain
\begin{equation}\label{ea}
|\C| \leq \binom{n}{r-1} - |\M|. 
\end{equation}

Since $s_M \leq 2^{r-1}$ for each $M \in \M$, equality holds in (\ref{ez}) only if $|\M| \geq \binom{n-1}{r-1}$. Thus combining (\ref{ez}) and (\ref{ea}):
\begin{eqnarray}\label{eq:final}
|\A| & =& |\B| + |\C| \nonumber  \\ & \leq &  \sum_{M \in \M}{s_M} + \binom{n}{r-1} - |\M| \nonumber\\ 
& \leq & \binom{n-1}{r-1} 2^{r-1} + \binom{n}{r-1} - \binom{n-1}{r-1} \nonumber \\ &
= & \binom{n-1}{r-1} 2^{r-1} + \binom{n-1}{r-2}.
\end{eqnarray}

This proves that $G$ is $r$-EKR by Lemma~\ref{starlm}. We now show that $G$ is strictly $r$-EKR. If $r=n$ then $r=1$ so the result trivially holds. Suppose $r < n$. Then, by Theorem~\ref{thm:meyer}, equality holds in (\ref{ez}) and therefore also in (\ref{eq:final}) only if $\B$ is an $r$-star centred at a leaf $x$ or a neighbour $y$ of a leaf. It follows easily that $\C = \emptyset$ if $\A=\mathcal{I}_y^{(r)}(G)$; thus $\A = \mathcal{I}_x^{(r)}(G)$ as desired. \end{proof}
 
 \begin{remark*}
We demonstrate that if $G$ is a depth-two claw with $n$ leaves, then $G$ is not $n$-EKR by describing a pairwise intersecting family that is larger than the largest $n$-star. Let $c$ be the root of $G$ and let $G' = G - c$, a graph containing~$n$ copies of $K_2$ each of which contains one leaf of $G$.  Clearly $G'$ contains $2^n$ independent sets of size $n$ which can be partitioned into complementary pairs.  Let $\B$ be a family of $2^{n-1}$ independent sets of size $n$ formed by considering each complementary pair and choosing either the one that contains the greater number of leaves of $G$, or, if they each contain half the leaves, choosing one arbitrarily.  Notice that $\B$ is pairwise intersecting but is not a star. Let $\C = \{C \in \I^{(n)}(G): c \in C\}$. Clearly, $|\C| = \binom{n}{n-1}=n$ and for each pair $C \in \C$, $B \in \B$, we have that $C \cap B \not= \emptyset$.  Thus if $\A=\B \cup \C$, then $\A$ is pairwise intersecting, maximal and $|\A| = |\B| + |\C| = 2^{n-1} + n$. By Lemma~\ref{mainstar},  $\A$ has one more element than the largest $n$-star in $G$.
\end{remark*}

The above remark together with Theorem~\ref{mainthm} motivates the following conjecture.

\begin{conjecture}
Let $n$ and $r$ be positive integers, $n > r$, and let $G$ be a depth-two claw with $n$ leaves. Then $G$ is $r$-EKR.
\end{conjecture}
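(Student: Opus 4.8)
The plan is to keep the partition $\A = \B \cup \C$ from the proof of Theorem~\ref{mainthm} and to exploit the fact that only one step there uses the hypothesis $n \geq 2r-1$. Since $\B$ is a pairwise intersecting family of $r$-independent sets of $G - c$, which is a disjoint union of $n$ copies of $K_2$, Theorem~\ref{thm:meyer} already gives $|\B| \leq \binom{n-1}{r-1}2^{r-1}$ for every $r < n$, so the bound (\ref{ez}) survives unchanged throughout the range of the conjecture. The obstruction lies entirely in the treatment of $\C$: the sets $N_B$ have size $n-r$, so the shadow $\partial_{r-1}\N$ is empty as soon as $n < 2r-1$ and Lemma~\ref{shadow} yields nothing. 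The task is therefore to find a substitute lower bound that controls $|\C|$ together with $|\B|$ across the whole range $r < n$.

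First I would sharpen the intersection condition used for $\C$. For $B \in \B$ let $L_B \subseteq M_B$ denote the set of leaves that actually lie in $B$, rather than those merely adjacent to a vertex of $B$. A set $C \in \C$ meets $B$ if and only if its $r-1$ leaves meet $L_B$, which is strictly stronger than meeting $M_B$. Writing $\D = \{\, [n] \setminus L_B : B \in \B \,\}$, every member of $\C$ then avoids $\partial_{r-1}\D$, so $|\C| \leq \binom{n}{r-1} - |\partial_{r-1}\D|$. Unlike the $N_B$, the members of $\D$ have varying sizes $n - |L_B| \in \{n-r, \dots, n\}$, and those of size at least $r-1$ still contribute to the shadow even when $n < 2r-1$. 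I would lower bound $|\partial_{r-1}\D|$ by applying the Kruskal--Katona theorem within each size class of $\D$, while simultaneously bounding $|\B|$ in terms of the collection $\{L_B\}$, using that a fixed leaf-set $L$ of size $\ell$ supports at most $\binom{n-\ell}{r-\ell}$ members of $\B$. The aim is to combine these two estimates into the single inequality $|\B| + |\C| \leq \binom{n-1}{r-1}2^{r-1} + \binom{n-1}{r-2}$, the size of the leaf-centred star of Lemma~\ref{mainstar}.

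An alternative and possibly more robust route is induction on $n$. Deleting a limb splits $\A$ into the sets avoiding that limb, which form a pairwise intersecting family of $r$-independent sets of the depth-two claw $G_{n-1}$, and the sets using its leaf or its middle vertex, which form two families of $(r-1)$-independent sets of $G_{n-1}$ after deletion. These three families are pairwise cross-intersecting, and one would reduce the inductive step to a cross-intersecting inequality of Frankl--Borg type, tuned to the present graph and to the role of the root. The delicate point is that when $r = n-1$ the first family lives in $G_{n-1}$ exactly at the diagonal $r = $ (number of limbs), the one case in which a depth-two claw is \emph{not} $r$-EKR; the induction thus does not close on its own and must be supplemented by an exact determination of the maximum pairwise intersecting family in that defect case, namely the family of size $2^{n-1}+n$ exhibited in the Remark above.

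I expect the main obstacle to be the quantitative trade-off between $|\B|$ and $|\C|$ in the regime $n < 2r-1$. When $\B$ is concentrated at a single leaf the bound is immediate, and when $\B$ is an all-middle star one has $\C = \emptyset$; the difficulty is the intermediate regime, in which $\B$ has many distinct but small leaf-sets $L_B$. These are precisely the configurations for which the Kruskal--Katona lower bound on $|\partial_{r-1}\D|$ is weakest and for which the cross-intersecting inequality must be tight. Moreover, any successful argument has to be sharp enough to separate $r = n-1$, where the conjecture predicts the EKR property, from $r = n$, where it provably fails by a single set; controlling this one-set margin at the diagonal is, I believe, where the real work lies.
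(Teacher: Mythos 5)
You are attempting a statement that the paper does not prove at all: it is stated as an open conjecture, motivated by Theorem~\ref{mainthm} (which settles the range $n \geq 2r-1$) and by the remark showing that the property fails at $r=n$ by exactly one set. So there is no paper proof to compare against, and your proposal does not close the gap either: it is a research plan, not a proof. Your diagnosis of the obstruction is accurate --- the bound (\ref{ez}) on $|\B|$ via Theorem~\ref{thm:meyer} does survive for all $r \leq n$, and the only step of the paper's argument that needs $n \geq 2r-1$ is the application of Lemma~\ref{shadow}, since $b = n-(2r-1)$ must be nonnegative and $\partial_{r-1}\N$ is empty once $n < 2r-1$. Your sharpening of the intersection condition for $\C$ (a set containing the root meets $B$ if and only if its $r-1$ leaves meet $L_B$, not merely the representing set $M_B$) is correct and genuinely stronger than what the paper uses. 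But your first route ends precisely where the theorem begins: ``the aim is to combine these two estimates'' is the entire missing content. Kruskal--Katona applied within the size classes of $\D = \{[n]\setminus L_B : B \in \B\}$ lower-bounds $|\partial_{r-1}\D|$ only in terms of the number of \emph{distinct} leaf-sets $L_B$, whereas $|\B|$ is governed by multiplicities (the $s_M \leq 2^{r-1}$ count in the paper, or your $\binom{n-\ell}{r-\ell}$ count per fixed $L$); these two quantities decouple exactly in the intermediate regime you yourself flag, and no inequality is stated, let alone proved, that would force $|\B|+|\C| \leq \binom{n-1}{r-1}2^{r-1}+\binom{n-1}{r-2}$ there.

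The second route has a gap you candidly identify but do not repair: deleting a limb reduces the case $(r,n)$ to $(r,n-1)$ for the avoiding family and to $(r-1,n-1)$ for the two residual families, and at $r = n-1$ the first reduction lands on the diagonal case in which a depth-two claw is provably \emph{not} EKR, so the inductive hypothesis is false as stated. The ``exact determination of the maximum pairwise intersecting family in the defect case'' that you invoke is not supplied; note that the paper's remark only exhibits a maximal intersecting family of size $2^{n-1}+n$ beating the largest $n$-star (whose size is given by Lemma~\ref{mainstar}), and neither shows that this family is maximum nor characterises the extremal families, both of which your cross-intersecting induction would require, presumably with some stability. In short: you have correctly located where the proof of Theorem~\ref{mainthm} breaks and proposed two plausible lines of attack with the right boundary cases in view (in particular, the need to separate $r=n-1$ from $r=n$, where the margin is a single set), but every step beyond the reduction already present in the paper is left as an unverified aim, so the statement remains exactly what the paper says it is: a conjecture.
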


\section{Elongated Claws with Short Limbs}\label{auxsec} 

In this section we will prove Theorem~\ref{starthm}. We require some terminology and lemmas. For a vertex $v$ of a graph $G$,  let $G - v$ denote the graph obtained by deleting $v$ and incident edges from $G$, and let $G \del v$ be the graph obtained from $G$ by deleting the vertex $v$ and all its neighbours and their incident edges.

The following lemma has essentially the same proof as Lemma 2.5 in~\cite{kamat}, but we include a proof for completeness. 

\begin{lemma}\label{vlem}
Let $r$ be a positive integer, and let $G$ be a graph.  Let $v$ be a vertex of $G$ and let $u$ be a vertex of $G \del v$. Then 
\[
|\mathcal{I}^{(r)}_u(G)| =  |\mathcal{I}^{(r)}_u(G -v)| +  |\mathcal{I}^{(r-1)}_u(G \del v)|.
\]
\end{lemma}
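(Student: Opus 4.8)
The plan is to partition the family $\mathcal{I}^{(r)}_u(G)$ according to whether a given independent set contains the vertex $v$, and then to identify each of the two resulting subfamilies with one of the two families appearing on the right-hand side. Concretely, I would write $\mathcal{I}^{(r)}_u(G) = \mathcal{P} \sqcup \mathcal{Q}$, where $\mathcal{P} = \{A \in \mathcal{I}^{(r)}_u(G) : v \notin A\}$ and $\mathcal{Q} = \{A \in \mathcal{I}^{(r)}_u(G) : v \in A\}$, so that $|\mathcal{I}^{(r)}_u(G)| = |\mathcal{P}| + |\mathcal{Q}|$, and then show $|\mathcal{P}| = |\mathcal{I}^{(r)}_u(G-v)|$ and $|\mathcal{Q}| = |\mathcal{I}^{(r-1)}_u(G \del v)|$. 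Before starting I would record that, since $u$ is a vertex of $G \del v$, it is neither $v$ nor a neighbour of $v$; in particular $u \neq v$, so $u$ survives as a vertex of both $G - v$ and $G \del v$, which is what makes the two target families well defined.

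For $\mathcal{P}$, I would observe that a set of vertices omitting $v$ is independent in $G$ exactly when it is independent in $G - v$, because deleting $v$ removes only edges incident to $v$, and such edges are irrelevant to a set that avoids $v$. Hence the identity map is a bijection between $\mathcal{P}$ and $\mathcal{I}^{(r)}_u(G-v)$, giving the first count. For $\mathcal{Q}$, I would use the map $A \mapsto A \setminus \{v\}$ together with its inverse $S \mapsto S \cup \{v\}$. If $A$ is independent in $G$ and contains $v$, then $A$ contains no neighbour of $v$, so $A \setminus \{v\}$ is a set of size $r-1$ avoiding $v$ and all its neighbours; it therefore lies in the vertex set of $G \del v$ and, being independent in $G$, is independent in the induced subgraph $G \del v$. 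Moreover $u \in A \setminus \{v\}$ since $u \neq v$, so $A \setminus \{v\} \in \mathcal{I}^{(r-1)}_u(G \del v)$. Conversely, for any $S \in \mathcal{I}^{(r-1)}_u(G \del v)$, the set $S \cup \{v\}$ is independent in $G$ because $S$ avoids every neighbour of $v$, has size $r$, and contains both $u$ and $v$, hence lies in $\mathcal{Q}$. These maps are mutually inverse, so $|\mathcal{Q}| = |\mathcal{I}^{(r-1)}_u(G \del v)|$, and summing the two counts yields the stated identity.

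I do not expect a serious obstacle here; the argument is a routine partition-and-bijection. The only points that genuinely require care are the two verifications that the correspondence for $\mathcal{Q}$ is a genuine bijection: that removing $v$ lands one inside $G \del v$ (using that an independent set containing $v$ meets no neighbour of $v$), and that adding $v$ back creates no edge — which is precisely why the operation $G \del v$ must delete all neighbours of $v$ rather than $v$ alone. Keeping track of the hypothesis that $u$ is a vertex of $G \del v$, so that $u \neq v$ and $u$ persists in both smaller graphs, is the other detail to state explicitly.
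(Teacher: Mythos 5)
Your proof is correct and follows essentially the same approach as the paper: partition $\mathcal{I}^{(r)}_u(G)$ according to whether $v$ belongs to the set, then identify the two parts with $\mathcal{I}^{(r)}_u(G-v)$ and $\mathcal{I}^{(r-1)}_u(G \del v)$. You simply spell out the bijections and the role of the hypothesis that $u$ lies in $G \del v$, details the paper leaves as an observation.
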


\begin{proof}
Define a partition of $\mathcal{I}^{(r)}_u(G)$: $\B = \{A \in \mc{I}^r_u(G): v\notin A\}$ and $\C = \{A \in \mc{I}^{(r)}_u(G): v\in A\}$. Observe that $|\B| =  |\mathcal{I}^{(r)}_u(G - v)|$ and $|\C| = |\mathcal{I}^{(r-1)}_u(G \del v)|$. This implies the lemma. 
\end{proof}

\begin{lemma}\label{lstar}
Let $r$ be a positive integer and let $G$ be an elongated claw with a short limb with root $c$. If $x$ is a leaf of $G$ adjacent to $c$, then $x$ is the centre of a largest $r$-star of $G$.
\end{lemma}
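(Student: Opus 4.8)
The plan is to use Lemma~\ref{starlm} to turn this into a comparison between leaves. Since Lemma~\ref{starlm} already guarantees that some leaf of $G$ is the centre of a largest $r$-star, it suffices to prove that $|\I^{(r)}_x(G)| \ge |\I^{(r)}_y(G)|$ for \emph{every} leaf $y$ of $G$. If $y$ lies on another short limb then there is an automorphism of $G$ exchanging $x$ and $y$, so the two stars have equal size; hence I may assume $y$ is the leaf of a limb of length $k \ge 2$.

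Write this limb as $c = q_0, q_1, \dots, q_k = y$. The key structural observation is that $x, c, q_1, \dots, q_k$ induce a path $P$ (within this set the only neighbours of $c$ are $x$ and $q_1$, and $x$ is a leaf), which I relabel $p_0 = x,\ p_1 = c,\ p_2 = q_1,\ \dots,\ p_{k+1} = y$. Let $R = G - \{x, c, q_1, \dots, q_k\}$, the disjoint union of the remaining limbs with their root-vertex deleted, so that $V(G)$ is the disjoint union of $P$ and $R$ with the only edges between them running through $c$. Let $\rho_s$ be the number of independent sets of size $s$ in $R$, and $\rho'_s$ the number of those avoiding $N(c)$, so $\rho'_s \le \rho_s$. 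Classifying each member $A$ of a star by $j := |A \cap P|$: for the $x$-star, $x \in A$ forces $c \notin A$, the $R$-part is then unconstrained, and $|\I^{(r)}_x(G)| = \sum_j \alpha_j \rho_{r-j}$, where $\alpha_j$ counts the size-$j$ independent sets of $P$ containing $p_0$. For the $y$-star I split on whether $c \in A$: when $c \notin A$ the $R$-part is free and the $P$-part is counted by $\beta_j$ (size-$j$ independent sets of $P$ containing $p_{k+1}$ but not $p_1$); when $c \in A$ the $R$-part must avoid $N(c)$ and the $P$-part is counted by $\gamma_j$ (size-$j$ independent sets of $P$ containing both $p_{k+1}$ and $p_1$). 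Thus $|\I^{(r)}_y(G)| = \sum_j \beta_j \rho_{r-j} + \sum_j \gamma_j \rho'_{r-j}$.

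It then remains to establish the single identity $\alpha_j = \beta_j + \gamma_j$. Deleting $p_0$ (respectively the forced-out neighbours of $p_{k+1}$ and $p_1$) reduces each of $\alpha_j,\beta_j,\gamma_j$ to counts of independent sets of sub-paths of $P$, and the identity then follows from one application of the standard recurrence $f(n,m) = f(n-1,m) + f(n-2,m-1)$, where $f(n,m)$ denotes the number of size-$m$ independent sets of a path on $n$ vertices. Granting this, $|\I^{(r)}_y(G)| = \sum_j \beta_j \rho_{r-j} + \sum_j \gamma_j \rho'_{r-j} \le \sum_j (\beta_j + \gamma_j)\rho_{r-j} = \sum_j \alpha_j \rho_{r-j} = |\I^{(r)}_x(G)|$, which is exactly what is wanted. (Equivalently, since $\alpha_j = \beta_j + \gamma_j$ and $\rho'_s \le \rho_s$, one may read off an explicit injection of $\I^{(r)}_y(G)$ into $\I^{(r)}_x(G)$, splitting it according to whether $c \in A$.)

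The step I expect to be the main obstacle is the bookkeeping behind $\alpha_j = \beta_j + \gamma_j$ combined with the correct coupling to the rest of the graph: the argument hinges on observing that the only interaction between $P$ and $R$ is through $c$, which is precisely what forces the appearance of $\rho'_s$ rather than $\rho_s$ in the $c \in A$ term and makes the inequality hold. The degenerate small cases---small $k$ (so that some auxiliary sub-path is empty) and $r = 1$---must be checked against the natural convention $f(0,0) = 1$, but these are routine.
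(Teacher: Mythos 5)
Your proof is correct, but it takes a genuinely different route from the paper's. The paper does not invoke Lemma~\ref{starlm} here at all: after dispatching $v=c$ by swapping $c$ for $x$, it constructs, for every vertex $v$ that is not a leaf adjacent to $c$, an explicit injection from $\I^{(r)}_v(G)$ into $\I^{(r)}_x(G)$ by case analysis on whether $x\in A$ and $c\in A$, the third case shifting $A$'s intersection with the limb of $v$ one step towards the root and exchanging $c$ for $x$ --- the same shifting device as in the proof of Lemma~\ref{starlm}, aimed at the other endpoint. You instead use Lemma~\ref{starlm} to reduce the problem to comparing $x$ against the other leaves, and then run a counting argument across the induced path $P$ through $c$: your decompositions $|\I^{(r)}_x(G)|=\sum_j\alpha_j\rho_{r-j}$ and $|\I^{(r)}_y(G)|=\sum_j\beta_j\rho_{r-j}+\sum_j\gamma_j\rho'_{r-j}$ are sound (the only edges between $P$ and $R$ meet $c$, and $P$ is induced), and the conclusion follows from $\rho'_s\le\rho_s$ once $\alpha_j=\beta_j+\gamma_j$ is known. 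One simplification you missed: that identity needs no recurrence and no $f(0,0)$-style conventions, because $\beta_j+\gamma_j$ counts \emph{all} size-$j$ independent sets of $P$ containing the endpoint $p_{k+1}$, and the reversal automorphism of the path swaps $p_0$ with $p_{k+1}$; this gives $\alpha_j=\beta_j+\gamma_j$ outright and disposes of the degenerate small-$k$ cases you flagged as the main obstacle. As for what each approach buys: the paper's injection is self-contained (so Lemma~\ref{lstar} does not depend on Lemma~\ref{starlm}) and treats all non-leaf centres uniformly, while your version makes transparent exactly where the inequality comes from --- occupying the root $c$ constrains the rest of the graph, which is the content of $\rho'_s\le\rho_s$ --- and, as you observe, it can be unwound into an injection, so it serves equally well where the lemma is applied in the proof of Theorem~\ref{starthm}.
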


\begin{proof}
Let $v$ be a vertex in $G$ that is not a leaf adjacent to $c$.  We must show that $\I^{(r)}_v(G)$ is no larger than $\I^{(r)}_x(G)$.  If $v=c$ this is immediate since $\{A \setminus \{c\} \cup \{x\} : A \in \I^{(r)}_c(G)  \}$ has the same cardinality as $\I^{(r)}_c(G)$ and is a subset of $\I^{(r)}_x(G)$.

If $v \neq c$, let  $L$ be the limb of $G$ that contains $v$.  To prove the lemma, we find an injection $f$ from $\I^{(r)}_v(G)$ to $\I^{(r)}_x(G)$.  Let $A \in \I^{(r)}_v(G)$. We distinguish a number of cases. 
\begin{enumerate}
\item If $x \in A$, then $f(A) = A$. 
\item If $x \not\in A$ and $c \not\in A$, then $f(A) = A\backslash\{v\} \cup \{x\}$.
\item If $x \not\in A$ and $c \in A$, let $X =\{v=x_1, \dots, x_m\}$ be the set of vertices from~$v$ towards the neighbour $x_m$ of $c$ in $L$. Let $Y = A \cap X = \{x_{i_1}, \dots, x_{i_j}\}$ for some $m > j \geq 1$. Let $Z = \{x_{i_1+1}, \dots, x_{i_j+1}\}$ and observe that $|Y| = |Z|$. Then $f(A) = (A \cup Z \cup \{x\}) \backslash (Y \cup \{c\})$. 
\end{enumerate}

\noindent It can be verified that $f$ is injective as required. 
\end{proof}

We now prove Theorem~\ref{starthm} using an approach based on that of the proof of~\cite[Theorem 1.22]{kamat}. 

\begin{proof}[Proof of Theorem~\ref{starthm}]
Let $c$ be the root of $G$. Let $\A \subseteq \indr$ be any pairwise intersecting family.  We must show that $\A$ is no larger than the largest $r$-star. We use induction on $r$. If $r=1$ the result is true so suppose that $r \geq 2$ and that the result is true for smaller values of~$r$. 

We now use induction on the number of vertices in $G$.  The base case is that $G$ contains only the root and $n$ leaves; that is, $G=K_{1,n}$ and so the result follows from Theorem~\ref{starekr}.  So suppose that the number of vertices is at least~$n+2$ and that the result is true for graphs with fewer vertices.  

Let $x$ be a leaf adjacent to $c$. Let $v$ be a leaf that is not adjacent to $c$.  Let~$w$ be the unique neighbour of $v$ and let $z$ denote the other neighbour of~$w$. 

Define  $f:\A \to \indr$ such that for each $A \in \A$
\[
f(A)=\left\{\begin{array}{ll}A \backslash \{v\} \cup \{w\}, & v \in A,z \not\in A, A \backslash \{v\} \cup \{w\} \not\in \A\\ A,& \mbox{otherwise.}\end{array}\right.
\]

Define the families:
\begin{gather*}
\A' = \{f(A): A \in \A\}, \\
\B  = \{A \in \A': v \not\in A \},\\ 
\C = \{A \backslash \{v\} : v \in A, A \in \A'\}.
\end{gather*}
Notice that
\begin{equation}\label{eqmain}
|\A| = |\A'| = |\B| + |\C|. 
\end{equation}

\begin{claim}\label{claim1}
Each of $\B$ and $\C$ is pairwise intersecting.
\end{claim}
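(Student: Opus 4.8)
The plan is to prove that $\B$ and $\C$ are each pairwise intersecting by analysing what the map $f$ does and using the fact that $\A$ itself is pairwise intersecting. The key structural point is that $f$ either fixes a set or performs the single local ``slide'' that replaces the leaf $v$ by its neighbour $w$; so every set in $\A'$ that fails to contain $v$ either already avoided $v$ in $\A$, or was obtained from some $A\ni v$ by sliding $v$ to $w$. I would first record the elementary properties of $f$: it is injective (hence the first equality in~\eqref{eqmain}), and $A$ and $f(A)$ differ, if at all, only in swapping $v$ for $w$, so $f(A)\supseteq A\setminus\{v\}$ always. I would also note $f(A)\in\indr$ is genuinely independent, since the slide is only performed when $z\notin A$ (so $w$ has no neighbour in the resulting set).

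For $\B$ I would take two sets $B_1,B_2\in\B$, neither containing $v$. Each is $f(A_i)$ for some $A_i\in\A$. The only way a nonempty intersection of the $A_i$ could be destroyed by $f$ is if the shared element were exactly $v$; but since $A_1\cap A_2\neq\emptyset$ (as $\A$ is pairwise intersecting) and we may worry that their sole common vertex is $v$, I would show that even in that case $w$ ends up in both images, or the slide is blocked on one side by the ``$\notin\A$'' condition, forcing the original $v$ to remain and hence $v\in B_i$, a contradiction with $B_i\in\B$. The cleanest route is the standard compression argument: if $v$ were the unique common element of $A_1,A_2$, one checks that one of the slides must actually be carried out and the other blocked precisely because the slid set already lies in $\A$, and then tracing the definitions yields a common vertex of $B_1$ and $B_2$.

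For $\C$ the sets are of the form $A\setminus\{v\}$ with $v\in A\in\A'$; since $f$ never creates a new occurrence of $v$, such $A$ must already have had $v\in A\in\A$ \emph{and} have been fixed by $f$, meaning the slide was blocked. I would use the blocking condition $A\setminus\{v\}\cup\{w\}\in\A$ together with pairwise intersection of $\A$ to produce a common vertex of any two members $A_1\setminus\{v\}, A_2\setminus\{v\}$ that is different from $v$, which is exactly what is needed.

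The main obstacle, and the step deserving the most care, is the case in both arguments where two sets of $\A$ intersect \emph{only} in $v$: here one must exploit the precise asymmetric ``blocking'' clause in the definition of $f$ (the condition $A\setminus\{v\}\cup\{w\}\notin\A$) to locate an alternative common vertex after sliding. This is the familiar delicate point of Kamat-style compression proofs, and I expect the bulk of the verification to consist of disentangling which of the two relevant slides is performed and which is blocked, and reading off the witnessing common element in each subcase.
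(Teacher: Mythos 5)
Your treatment of $\B$ is correct and is essentially the paper's argument in different clothing: every member of $\B$ either lies in $\A$ (the map fixed it) or arose from a performed slide and therefore contains $w$; slid--slid pairs meet in $w$, fixed--fixed pairs meet because $\A$ is intersecting, and in the mixed case any common element of $B_1\in\A$ and the preimage $B_2\setminus\{w\}\cup\{v\}\in\A$ is distinct from $v$ (since $v\notin B_1$) and so survives into $B_1\cap B_2$. Your observation that two preimages meeting only in $v$ force both slides to have been performed is also right, and the blocking clause is in fact never needed for $\B$.

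The genuine gap is in $\C$. You write that a fixed set $C\cup\{v\}\in\A$ means ``the slide was blocked'' and plan to use only the condition $C\cup\{w\}\in\A$. But $f$ performs the slide only when all three of $v\in A$, $z\notin A$ and $A\setminus\{v\}\cup\{w\}\notin\A$ hold, so being fixed with $v\in A$ gives the disjunction: $z\in C$ \emph{or} $C\cup\{w\}\in\A$. When $z\in C$ your plan fails outright, since $C\cup\{w\}$ is then not even independent ($z$ is adjacent to $w$) and so cannot belong to $\A$. The paper closes this with a case split you omit: if $C_1$ and $C_2$ both contain $z$ they already intersect; otherwise, say $z\notin C_1$, so $C_1\cup\{w\}\in\A$, and one intersects it with $C_2\cup\{v\}\in\A$. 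Note two further points of care here. First, the pairing must be asymmetric: intersecting the two blocking sets $C_1\cup\{w\}$ and $C_2\cup\{w\}$ could yield only the vertex $w$, which lies in neither $C_i$, so that route does not work. Second, you say the common vertex need only differ from $v$, but it must also differ from $w$; this follows by independence, since $v\notin C_1\cup\{w\}$ and $w\notin C_2\cup\{v\}$ (as $w$ is adjacent to $v$), so the common element indeed lies in $C_1\cap C_2$. With the $z$-dichotomy and the correct pairing added, your outline becomes the paper's proof.
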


\begin{proof}
By the definition of $f$, we can partition $\B$ into $\B_1 = \{B \in \B : B \in \A\}$ and $\B_2 = \{B \in \B : B \setminus \{w\} \cup \{v\} \in \A \}$.   Then $\B_1$ is pairwise intersecting (since $\A$ is intersecting) and $\B_2$ is pairwise intersecting as every member contains $w$.  Next consider $B_1 \in \B_1$ and $B_2 \in \B_2$.   As $B_1$ and $B_2 \setminus \{w\} \cup \{v\}$ are both in $\A$ they intersect and this intersection does not contain $v$ (since it is not in $B_1$) so is a superset of $B_1 \cap B_2$.  So $\B$ is intersecting. 

By definition, if $C \in \C$, then $C \cup \{v\}$ is in $\A'$ and, by the definition of $f$, also in $\A$.  Using the definition of $f$ again, we must have that either $z$ is in~$C$, or $C  \cup \{w\}$ is in $\A$.  Let $C_1$ and $C_2$ be two members of $\C$.  Then either they both contain $z$ or if one of them, say $C_1$, does not, then $C_1 \cup \{w\}$ is in~$\A$.  As $C_2 \cup \{v\}$ is also in $\A$ and $\A$ is intersecting, we have that $C_1 \cup \{w\}$ and $C_2 \cup \{v\}$ must intersect.  By the independence of the two sets, this intersection contains neither $v$ nor $w$ and so $C_1$ and $C_2$ must intersect. The claim is proved.
\end{proof}

Note that $G - v$ is an elongated claw with a short limb, fewer vertices than~$G$ and with $n$ leaves.  We also note that each member of $\B$ contains $r$ vertices of $G-v$ and, by Claim~\ref{claim1}, $\B$ is pairwise intersecting.  By the induction hypothesis, $G-v$ is $r$-EKR and so the largest intersecting families are $r$-stars, and, by Lemma~\ref{lstar}, $\mathcal{I}^{(r)}_x(G -v)$ is a largest $r$-star of $G-v$.  Hence
\begin{equation}\label{eqb}
|\B| \leq |\mathcal{I}^{(r)}_x(G -v)|.
\end{equation}

Note that $G \del v$ is an elongated claw with a short limb, fewer vertices than~$G$ and with either $n$ or $n-1$ leaves. We also note that each member of~$\C$ contains $r-1$ vertices of $G \del v$ and, by Claim~\ref{claim1}, $\C$ is pairwise intersecting.  By the induction hypothesis, $G \del v$ is $(r-1)$-EKR and so the largest intersecting families are $(r-1)$-stars, and, by Lemma~\ref{lstar}, $\mathcal{I}^{(r-1)}_x(G \del v)$ is a largest $(r-1)$-star of $G \del v$.  Hence
\begin{equation}\label{eqc}
|\C| \leq |\mathcal{I}^{(r-1)}_x(G \del v)|.
\end{equation}

Combining (\ref{eqmain}), (\ref{eqb}) and (\ref{eqc}) and applying Lemma~\ref{vlem}:
\begin{eqnarray*}
|\A|& =& |\B|+|\C| \\  &\leq& 
|\mathcal{I}^{(r)}_x(G - v)| + |\mathcal{I}^{(r-1)}_x(G \del v)| \\ &=& |\mathcal{I}^{(r)}_x(G)|
\end{eqnarray*} 
and the theorem is proved. 
\end{proof}

\section{Centres of Largest $r$-stars in Trees} 

In this section, we prove Theorem~\ref{centrethm}.  We shall do this by defining another family of trees.    Let $n$, $k$ and $a$ be positive integers.  A $(k,a)$-claw is an elongated claw with $k$ limbs each of length $a$.  The tree $T^{n,k,a}$ contains, as induced subgraphs, $n$ disjoint $(k,a)$-claws, and one further vertex, the \emph{root} of $T^{n,k,a}$, that is joined by an edge to the root of each $(k,a)$-claw.  Figure~\ref{superclawfig} shows  $T^{5,2,3}$ as an example.  We note that Baber~\cite{baber} and Borg~\cite{borg3} showed that Conjecture~\ref{c2} is false by considering $T^{2,k,2}$.

\tikzstyle{vertex}=[circle,draw=black, minimum size=6pt, inner sep=0pt]
\tikzstyle{edge} =[draw,thick,-,black]

\begin{figure}[h]
\begin{center}

\begin{tikzpicture}[scale=0.5]

\node[vertex, fill=black]  (root) at (8,3) {};

\begin{scope}     
    \foreach \pos/ \name in {(0,0)/first, (-1,-2)/a, (-1,-4)/b, (-1,-6)/c, (1,-2)/d, (1,-4)/e, (1,-6)/f}
        \node[vertex, fill=black]  (\name) at \pos {};
        
\path[edge] (0,0) --  (-1,-2);
\path[edge] (0,0) --  (1,-2);
 \path[edge] (-1,-4) --  (-1,-2);
 \path[edge] (-1,-4) --  (-1,-6);
 \path[edge] (1,-2) --  (1,-4);
 \path[edge] (1,-2) --  (1,-6);

 \end{scope}

\begin{scope}[xshift=4cm] 
    \foreach \pos/ \name in {(0,0)/second, (-1,-2)/a, (-1,-4)/b, (-1,-6)/c, (1,-2)/d, (1,-4)/e, (1,-6)/f}
        \node[vertex, fill=black]  (\name) at \pos {};
        
\path[edge] (0,0) --  (-1,-2);
\path[edge] (0,0) --  (1,-2);
 \path[edge] (-1,-4) --  (-1,-2);
 \path[edge] (-1,-4) --  (-1,-6);
 \path[edge] (1,-2) --  (1,-4);
 \path[edge] (1,-2) --  (1,-6);

\end{scope}

\begin{scope}[xshift=8cm] 
    \foreach \pos/ \name in {(0,0)/third, (-1,-2)/a, (-1,-4)/b, (-1,-6)/c, (1,-2)/d, (1,-4)/e, (1,-6)/f}
        \node[vertex, fill=black]  (\name) at \pos {};
        
\path[edge] (0,0) --  (-1,-2);
\path[edge] (0,0) --  (1,-2);
 \path[edge] (-1,-4) --  (-1,-2);
 \path[edge] (-1,-4) --  (-1,-6);
 \path[edge] (1,-2) --  (1,-4);
 \path[edge] (1,-2) --  (1,-6);
        \end{scope}

\begin{scope}[xshift=12cm] 
    \foreach \pos/ \name in {(0,0)/fourth, (-1,-2)/a, (-1,-4)/b, (-1,-6)/c, (1,-2)/d, (1,-4)/e, (1,-6)/f}
        \node[vertex, fill=black]  (\name) at \pos {};
        
\path[edge] (0,0) --  (-1,-2);
\path[edge] (0,0) --  (1,-2);
 \path[edge] (-1,-4) --  (-1,-2);
 \path[edge] (-1,-4) --  (-1,-6);
 \path[edge] (1,-2) --  (1,-4);
 \path[edge] (1,-2) --  (1,-6);
        \end{scope}

\begin{scope}[xshift=16cm] 
    \foreach \pos/ \name in {(0,0)/fifth, (-1,-2)/a, (-1,-4)/b, (-1,-6)/c, (1,-2)/d, (1,-4)/e, (1,-6)/f}
        \node[vertex, fill=black]  (\name) at \pos {};
        
\path[edge] (0,0) --  (-1,-2);
\path[edge] (0,0) --  (1,-2);
 \path[edge] (-1,-4) --  (-1,-2);
 \path[edge] (-1,-4) --  (-1,-6);
 \path[edge] (1,-2) --  (1,-4);
 \path[edge] (1,-2) --  (1,-6);
        \end{scope}

       \path[edge] (0,0) --  (root);
       \path[edge] (4,0) --  (root);
       \path[edge] (8,0) --  (root);
       \path[edge] (12,0) --  (root);
       \path[edge] (16,0) --  (root);

\end{tikzpicture}
\end{center}

\caption{$T^{5,2,3}$}  \label{superclawfig}

\end{figure}

The key to proving Theorem~\ref{centrethm} is to show that, for certain values, the largest $r$-star of $T^{n,k,a}$ is centred at its root.  Let $\mathcal{I}_{\mbox{\scriptsize root}} (T^{n,k,a})$ be the family of independent sets of $T^{n,k,a}$ that contain its root, and let $\mathcal{I}_{\mbox{\scriptsize leaf}} (T^{n,k,a})$ be the number of independent sets of $T^{n,k,a}$ that contain a particular leaf (note that, by symmetry, the size of this family does not depend on which leaf we choose).   Notice that in these definitions, we are considering independent sets of all possible sizes.  In Lemma~\ref{lm:ce}, we will think of $|\mathcal{I}_{\mbox{\scriptsize root}} (T^{n,k,a})|$ and $|\mathcal{I}_{\mbox{\scriptsize leaf}} (T^{n,k,a})|$ as sequences indexed by $k$ with fixed $n$ and $a$.

First we need some further definitions and lemmas.  Let $a$ be a nonnegative integer.  Let $P_a$ denote the path on $a$ vertices.  Let $F(a)$ denote the number of independent sets in $P_a$  (notice that the empty set is an independent set of any graph).  We state without proof two simple observations.

\begin{lemma} \label{fa}
$F(0)=1$, $F(1)=2$ and, for $a \geq 2$, $F(a) = F(a-1) + F(a-2)$.  Moreover, for $a \geq 3$, each vertex of degree 1 in $P_a$ belongs to more independent sets that each vertex of degree 2. 
\end{lemma}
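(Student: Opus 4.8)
The plan is to treat the two assertions separately, disposing of the recurrence first and then reducing the degree comparison to a single Fibonacci-type product inequality. For the recurrence, the base cases are immediate: $P_0$ is the empty graph whose only independent set is $\emptyset$, so $F(0)=1$, and $P_1$ is a single vertex contributing $\emptyset$ and one singleton, so $F(1)=2$. For $a\ge 2$, write $P_a=v_1v_2\cdots v_a$ and partition the independent sets of $P_a$ according to whether they contain the endpoint $v_a$. Those avoiding $v_a$ are exactly the independent sets of $P_a-v_a=P_{a-1}$, of which there are $F(a-1)$; those containing $v_a$ must omit its unique neighbour $v_{a-1}$, and the remaining choices form an arbitrary independent set of $v_1\cdots v_{a-2}=P_{a-2}$, giving $F(a-2)$. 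Summing yields $F(a)=F(a-1)+F(a-2)$.

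For the second assertion I would count, for each $i$, the number $g(i)$ of independent sets of $P_a$ containing $v_i$. Fixing $v_i$ forces out its neighbours, and the surviving vertices split into two mutually non-adjacent sub-paths, the left path $v_1\cdots v_{i-2}$ (a copy of $P_{i-2}$) and the right path $v_{i+2}\cdots v_a$ (a copy of $P_{a-i-1}$); choosing an independent set in each independently gives $g(i)=F(i-2)\,F(a-i-1)$. Here I adopt the convention $F(-1)=1$ (consistent with the recurrence, since $F(1)=F(0)+F(-1)$) so that the formula also covers the two endpoints, for which it returns $g(1)=g(a)=F(a-2)$. A degree-1 vertex is an endpoint and a degree-2 vertex is an interior vertex $v_i$ with $2\le i\le a-1$, for which both indices $i-2$ and $a-i-1$ are nonnegative; such vertices exist precisely when $a\ge 3$.

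It then remains to show that $F(a-2)>F(i-2)\,F(a-i-1)$ for every interior $i$, and this is the crux of the argument. Writing $p=i-2\ge 0$ and $q=a-i-1\ge 0$, so that $p+q=a-3$, the claim becomes $F(p+q+1)>F(p)F(q)$. I would establish the exact identity $F(p+q+1)=F(p)F(q)+F(p-1)F(q-1)$ combinatorially: among the independent sets of $P_{p+q+1}=u_1\cdots u_{p+q+1}$, those omitting the vertex $u_{p+1}$ split (as above) into independent sets of $P_p$ and $P_q$, contributing $F(p)F(q)$, while those containing $u_{p+1}$ omit $u_p$ and $u_{p+2}$ and split into independent sets of $P_{p-1}$ and $P_{q-1}$, contributing $F(p-1)F(q-1)$. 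Once the identity is in hand the strict inequality is immediate, since $F(p-1)F(q-1)\ge F(-1)^2=1>0$. The only mild obstacle is the bookkeeping at the boundary cases $p=0$ or $q=0$, which the convention $F(-1)=1$ handles cleanly; everything else is a one-line consequence of the identity.
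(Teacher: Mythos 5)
Your proposal is correct, and in fact the paper offers nothing to compare it with: the authors explicitly write ``We state without proof two simple observations,'' so Lemma~\ref{fa} is asserted without any argument, and your write-up supplies the missing proof in full. Both halves of your argument are sound. The recurrence part is the standard deletion argument conditioning on the endpoint $v_a$, which is surely what the authors had in mind. For the degree comparison, your count $g(i)=F(i-2)F(a-i-1)$ for the number of independent sets containing $v_i$ is right (the deleted closed neighbourhood leaves two non-adjacent subpaths), the convention $F(-1)=1$ is consistent with the recurrence and correctly yields $g(1)=g(a)=F(a-2)$, and the identity $F(p+q+1)=F(p)F(q)+F(p-1)F(q-1)$ --- the Fibonacci addition formula in this indexing --- is proved by exactly the same conditioning, on $u_{p+1}$; I checked the extreme cases $p=q=0$ (i.e., $a=3$, where $F(1)=2>1=F(0)^2$, matching a direct count in $P_3$) and $p=0$, $q\geq 1$, and the convention handles them as you claim. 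The strictness then follows since $F(p-1)F(q-1)\geq F(-1)^2=1$. One stylistic remark: an alternative in the spirit of the paper (cf.\ the injections in Lemma~\ref{starlm} and Lemma~\ref{lstar}) would be to exhibit an explicit injection from the independent sets containing an interior vertex into those containing an endpoint, avoiding the identity altogether; but your exact-counting route is arguably cleaner, since it gives the precise surplus $F(p-1)F(q-1)$ rather than a bare inequality.
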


\noindent
We notice that $(F_a)$ is, of course, the Fibonacci sequence (without the initial term).

We now prove a simple result about $(k,a)$-claws that we will use later.

\begin{lemma} \label{kaclaws}
Let $n$, $k$ and $a$ be positive integers, let $b$ be a nonnegative integer and let $G$ be a graph that contains $n-1$ disjoint $(k,a)$-claws.  The number of independent sets of $G$ that each contain the roots of at least $b$ of the $(k,a)$-claws is
\[
\sum_{i=b}^{n-1} \binom{n-1}{i}F(a-1)^{ik}F(a)^{(n-1-i)k}.
\]
\end{lemma}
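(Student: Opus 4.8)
The plan is to reduce the count to a product over the $n-1$ vertex-disjoint $(k,a)$-claws that make up $G$. Since these claws are vertex-disjoint and there are no edges between them, an independent set of $G$ is precisely an independent set chosen in each claw, chosen independently of the others; moreover whether or not such a set contains a given claw's root is decided within that claw alone. So it suffices to count, for a single $(k,a)$-claw, the independent sets that contain its root and those that do not, and then to combine these counts across the $n-1$ claws according to how many roots are taken.

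First I would compute the two per-claw counts using Lemma~\ref{fa}, which gives $F(a)$ as the number of independent sets of the path $P_a$. Fix one $(k,a)$-claw with root $c$ and $k$ limbs, each limb being a path on $a$ non-root vertices whose endpoint is adjacent to $c$. If $c$ is not in the independent set, the $k$ limbs impose no constraint on one another, so we may freely choose an independent set of $P_a$ in each limb; this gives $F(a)^k$ sets avoiding the root. If $c$ is in the independent set, then the endpoint of each limb that is adjacent to $c$ is forbidden, so on each limb we are choosing an independent set of the remaining path $P_{a-1}$, giving $F(a-1)^k$ sets containing the root.

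With these two quantities in hand, I would count the independent sets of $G$ that contain at least $b$ of the roots by splitting according to the exact number $i$ of roots taken, with $b \le i \le n-1$. For each such $i$ there are $\binom{n-1}{i}$ ways to choose which claws have their root included; each of those $i$ claws then contributes $F(a-1)^k$ independent sets (those containing its root), while each of the remaining $n-1-i$ claws contributes $F(a)^k$ (those avoiding its root). Multiplying these factors for a fixed choice and summing over $i$ yields exactly
\[
\sum_{i=b}^{n-1} \binom{n-1}{i} F(a-1)^{ik} F(a)^{(n-1-i)k},
\]
as claimed.

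There is no serious obstacle here: the whole argument rests on the fact that vertex-disjoint components multiply their independent-set counts, so the only real content is the pair of per-claw counts $F(a)^k$ and $F(a-1)^k$, both of which follow immediately from the definition of $F$ as the number of independent sets of a path. The one point that repays a moment's care is the bookkeeping for the root-containing case: deleting the root's neighbour from each limb leaves a copy of $P_{a-1}$ (not $P_{a-2}$), so the correct factor is $F(a-1)^k$. With that observation the sum assembles directly.
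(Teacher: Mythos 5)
Your proof is correct and follows essentially the same route as the paper: both compute the per-claw counts $F(a-1)^k$ (root included, leaving $k$ copies of $P_{a-1}$) and $F(a)^k$ (root excluded, leaving $k$ copies of $P_a$), then partition by the exact number $i$ of roots taken and sum $\binom{n-1}{i}F(a-1)^{ik}F(a)^{(n-1-i)k}$ over $i \geq b$. Your explicit remark that the disjointness of the claws lets the counts multiply, and the care taken that deleting the root's neighbour leaves $P_{a-1}$ rather than $P_{a-2}$, are exactly the points the paper's proof relies on.
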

\begin{proof}
Note that a $(k,a)$-claw with its root removed is $k$ disjoint copies of~$P_a$ and so the claw contains $F(a)^k$ independent sets that do not include the root.  Similarly it contains $F(a-1)^k$ independent sets that do include the root (in this case one considers the graph obtained when the root and its neighbours are removed).  This kind of argument recurs many times in this section; we use it first to complete the proof of the lemma.

Each summand is the number of independent sets that contain \emph{exactly} $i$ of the roots:  the three factors count the number of ways of choosing~$i$ claws (whose roots will be in the independent set), the number of independent sets in those chosen claws (given that their roots are included) and the number of independent sets in the unchosen claws (given that their roots are not included).  Then the sum is over the possible values of $i$. 
\end{proof}

One more definition.  For a vertex $v$ in a graph, we denote by $N(v)$ the set of vertices that are adjacent to $v$.  

\begin{lemma}\label{lm:ce}
Let $a$ and $n$ be positive integers, $n \geq 2$, $a \geq 2$.  Then
\begin{equation*}
\frac{|\mathcal{I}_{\mbox{\scriptsize root}} (T^{n,k,a})|}{|\mathcal{I}_{\mbox{\scriptsize leaf}} (T^{n,k,a})|} \to \frac{F(a-1) + F(a-2)}{2F(a-2)}    \textrm{ as } k \to \infty.
\end{equation*} 
\end{lemma}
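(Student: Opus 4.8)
The plan is to obtain exact formulas for the numerator and denominator as functions of $k$ (with $n$ and $a$ fixed) and then to read off the limit by isolating the fastest-growing terms. Throughout I would use repeatedly the counting principle established in the proof of Lemma~\ref{kaclaws}: a single $(k,a)$-claw has $F(a)^k$ independent sets that avoid its root and $F(a-1)^k$ that contain it. I would also use that $F$ is strictly increasing, so that $F(a) > F(a-1)$; this is immediate from the recurrence of Lemma~\ref{fa} together with $F(0),F(1) > 0$, and it is what makes the asymptotics tractable.

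First I would compute the numerator. If the root $\rho$ of $T^{n,k,a}$ belongs to an independent set, then none of its $n$ neighbours --- the roots of the $n$ claws --- may belong to it, while the $n$ claws are otherwise mutually unconstrained. Hence each claw contributes independently a factor of $F(a)^k$, giving $|\mathcal{I}_{\mathrm{root}}(T^{n,k,a})| = F(a)^{kn}$.

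For the denominator I would fix a leaf $\ell$, sitting at the far end of one limb of a distinguished claw (the \emph{special} claw), and condition on whether $\rho$ lies in the independent set. When $\rho$ is present, every claw-root is excluded: the $n-1$ ordinary claws each contribute $F(a)^k$, while the special claw must contain $\ell$ and avoid its own root; since the limb carrying $\ell$ is a copy of $P_a$ with the endpoint $\ell$ forced in, it contributes $F(a-2)$ completions, and the remaining $k-1$ limbs contribute $F(a)$ each, for a special-claw factor of $F(a-2)F(a)^{k-1}$. When $\rho$ is absent, the ordinary claws are unconstrained and each contributes $F(a)^k + F(a-1)^k$, while the special claw still must contain $\ell$ but may now include or omit its own root; the root-excluding possibility again contributes $F(a-2)F(a)^{k-1}$ and the root-including possibility contributes a term of the form (constant)$\,\cdot F(a-1)^{k-1}$. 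Assembling the two cases yields a closed form
\[
|\mathcal{I}_{\mathrm{leaf}}(T^{n,k,a})| = F(a-2)F(a)^{kn-1} + L_k\bigl(F(a)^k + F(a-1)^k\bigr)^{n-1},
\]
where $L_k = F(a-2)F(a)^{k-1} + O\bigl(F(a-1)^{k}\bigr)$ is the special-claw factor in the $\rho$-absent case.

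Finally I would take $k \to \infty$. Because $F(a) > F(a-1)$, the factor $\bigl(F(a)^k + F(a-1)^k\bigr)^{n-1}$ is asymptotic to $F(a)^{k(n-1)}$, and $L_k$ is asymptotic to $F(a-2)F(a)^{k-1}$; hence the second summand is asymptotic to $F(a-2)F(a)^{kn-1}$, exactly matching the first summand. Thus $|\mathcal{I}_{\mathrm{leaf}}(T^{n,k,a})| \sim 2F(a-2)F(a)^{kn-1}$, so the ratio tends to $F(a)/\bigl(2F(a-2)\bigr)$, which equals $\bigl(F(a-1)+F(a-2)\bigr)/\bigl(2F(a-2)\bigr)$ by the recurrence of Lemma~\ref{fa}. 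The main obstacle is the bookkeeping inside the special claw: verifying that forcing $\ell$ into a limb $P_a$ leaves precisely $F(a-2)$ completions, and confirming that both the $\rho$-present and $\rho$-absent cases contribute the \emph{same} leading term $F(a-2)F(a)^{kn-1}$, since it is this coincidence that produces the decisive factor of $2$ in the denominator of the limit.
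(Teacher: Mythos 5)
Your proposal is correct, and it reaches the limit by a cleaner packaging of essentially the same computation as the paper. Both arguments begin with the same first split (you condition on whether the root $\rho$ lies in the set; the paper writes $I(y)=I'(y)+I(x,y)$), and both rest on the same asymptotic mechanism: two matched leading terms $F(a-2)F(a)^{nk-1}$ in the leaf count (your two cases; the paper's $I(x,y)$ and $|S_1|$) supply the factor $2$, while everything else is damped by a factor $(F(a-1)/F(a))^k$. Where you differ is in how the root-absent count is handled. The paper never writes either quantity in closed form: it partitions the independent sets of $T_y$ into $S_1$, $S_2$, $S_3$ according to whether they contain the special claw's root $c$ or meet the set $R$ of the other claws' roots, bounds $|S_2|$ and $|S_3|$ using the summation of Lemma~\ref{kaclaws} together with the crude estimate $\binom{n-1}{i}\leq\binom{n-1}{\lfloor (n-1)/2\rfloor}$, and then passes to the limit through ratios such as $|S_2|/I'(x)$ and $|S_3|/|S_1|$. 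You instead exploit exact factorization over the $n$ claws: the identity $|\mathcal{I}_{\mbox{\scriptsize root}}(T^{n,k,a})|=F(a)^{nk}$ (the paper computes only $I'(x)=F(a-1)F(a)^{nk-1}$ and $I(x,y)=F(a-2)F(a)^{nk-1}$ separately; their sum is $F(a)^{nk}$ by the recurrence of Lemma~\ref{fa}, though the paper never notes this), and the exact per-claw factor $F(a)^k+F(a-1)^k$, whose $(n-1)$st power absorbs in one stroke what the paper distributes across the Lemma~\ref{kaclaws} sum and the bounding of $S_2$ and $S_3$. This buys you a genuinely shorter argument: a two-term closed form for the leaf count from which the limit is read off directly, with no auxiliary lemma beyond the per-claw counts. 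I verified the details: the case-1 total $F(a-2)F(a)^{k-1}\cdot F(a)^{k(n-1)}=F(a-2)F(a)^{nk-1}$ agrees with the paper's $I(x,y)$; the claim that forcing the leaf into a limb $P_a$ leaves $F(a-2)$ completions is right (delete the leaf and its neighbour); and the one loose end in your write-up, the unspecified constant in $L_k$ (it equals $F(a-3)$ for $a\geq 3$ and $1$ for $a=2$, where $F(a-3)$ would be undefined), is harmless precisely because you only use that it does not depend on $k$, so $L_k\sim F(a-2)F(a)^{k-1}$ follows from $F(a-1)<F(a)$.
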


\begin{proof}
Let $x$ be the root of $T^{n,k,a}$ and let $y$ be one of its leaves.  
We note that as $k$ is not fixed, we are concerned with finding properties not of a specific graph, but of the family of graphs $T^{n,k,a}$ for fixed $n$ and $a$.  So we might have written $x_k$ and $y_k$ to indicate that when we choose a particular vertex, we must first fix which graph in the family we are looking at.  For simplicity, we avoid this explicit notation throughout.

Some more notation to improve readability: let $I(x)=|\mathcal{I}_{\mbox{\scriptsize root}} (T^{n,k,a})|$ and $I(y)=|\mathcal{I}_{\mbox{\scriptsize leaf}} (T^{n,k,a})|$, and let $I(x,y)$ be the number of independent sets that contain both $x$ and $y$.

 We can say immediately that 
\begin{equation}\label{ixy}
 I(x,y) = F(a-2)F(a)^{nk-1}
 \end{equation}
as we just need to count the independent sets in the graph obtained from $T^{n,k,a}$ when $x$ and $y$ and their neighbours are removed and this graph 
contains $nk-1$ copies of $P_a$ and one copy of $P_{a-2}$.  Let $I'(x)$ and $I'(y)$ be the number of independent sets that contain $x$ but not $y$, and $y$ but not $x$, respectively; that is $I(x) = I'(x) + I(x,y)$ and $I(y) = I'(y) + I(x,y)$.
Let $T_x = T^{n,k,a} \setminus (N(x) \cup \{x, y\})$ and $T_y = T^{n,k,a} \setminus (N(y) \cup \{x, y\})$. So $I'(x)$ is the number of independent sets in $T_x$ and $I'(y)$ is the number of independent sets in $T_y$.  As
 $T_x$ consists of $nk - 1$ disjoint copies of $P_a$ and one copy of $P_{a-1}$, we have 
 \begin{equation}\label{ix}
 I'(x) =  F(a-1) F(a)^{nk-1}. 
 \end{equation}
 
Evaluating $I'(y)$ will require a little more work. Notice that $T_y$ contains $n-1$ disjoint $(k, a)$-claws and one elongated claw $C$ that has $k-1$ limbs of length $a$ and one limb of length $a-2$.  Let $R$ be the set of roots of the $(k,a)$-claws and let $c$ denote the root of $C$.   We define a partition of the independent sets of $T_y$:
 \begin{itemize}
\item $S_1$ is the family of independent sets that do not contain any member of $R$ nor $c$.
\item $S_2$ is the family of independent sets that contain $c$.
\item $S_3$ is the family of independent sets that do not contain $c$ but do intersect $R$.
  \end{itemize}
So $S_1$ contains independent sets of $T_y \setminus (R \cup \{c\})$, a graph that consists of $nk-1$ disjoint copies of $P_a$ and one copy of $P_{a-2}$.  
Thus we have, using also~(\ref{ixy}),
  \begin{equation}\label{s1}
 |S_1| = F(a-2)F(a)^{nk-1} = I(x,y).
 \end{equation}
We will need the following observation:
\begin{equation} \label{ixs1}
\frac{|S_1|}{I'(x)} = \frac{F(a-2)}{F(a-1)}.
\end{equation}

Next we note that $S_2$ contains independent sets in $T_y$ that contain $c$ so to find its size we count the number of independent sets in $T_y \setminus (N(c) \cup  \{c\})$, a graph that contains $k-1$ copies of $P_{a-1}$, $n-1$ disjoint $(k,a)$-claws 
and one copy of $P_{a-3}$ (if $a \geq 3$) or one copy of $P_{a-2}$ (if $a=2$).  Thus, noting that $F(a-3) \leq F(a-2)$, we have
   \[|S_2| \leq F(a-2)F(a-1)^{k-1} \sum _{i=0}^{n-1} \binom{n-1} {i} F(a-1)^{ik}F(a)^{(n-1-i)k}\]   
where the sum is the number of independent sets in $n-1$ disjoint $(k,a)$-claws (by Lemma~\ref{kaclaws} with $b=0$).
Noting as before that $F(a-2)<F(a-1)$ and that, for all $i$, ${\binom{n-1}{i} \leq \binom{n-1} {\lfloor (n-1)/2 \rfloor}}$, we obtain
\begin{eqnarray*}
|S_2| &\leq & F(a-1)^{k} \sum _{i=0}^{n-1} \binom{n-1} {\lfloor (n-1)/2 \rfloor} F(a-1)^{ik}F(a)^{(n-1-i)k} \\
& = & F(a-1)^{k} F(a)^{(n-1)k}    \sum _{i=0}^{n-1} \binom{n-1} {\lfloor (n-1)/2 \rfloor} \left( \frac{F(a-1)}{F(a)}   \right)^{\!\!\!ik} \\
& \leq & F(a-1)^{k} F(a)^{(n-1)k}  \sum _{i=0}^{n-1} \binom{n-1} {\lfloor (n-1)/2 \rfloor}.  
\end{eqnarray*}
So we can write
\begin{equation} \label{s2}
|S_2| \leq c_2 F(a-1)^{k} F(a)^{(n-1)k}   
\end{equation}
where $c_2$ is a constant that does not depend on $k$.

Let us note now that, using (\ref{ix}) and (\ref{s2}), we have 
\begin{equation*} 
\frac{|S_2|}{I'(x)} \leq c_2 \frac{F(a)}{F(a-1)}  \left( \frac{F(a-1)}{F(a)}   \right)^{\!\!\!k}.
\end{equation*}
Hence, since for $a \geq 2$, $F(a-1) < F(a)$,
\begin{equation} \label{ixs2}
\frac{|S_2|}{I'(x)} \to 0 \textrm{ as } k \to \infty.
\end{equation}
And from (\ref{s1}) and (\ref{s2}), we have 
\begin{equation*} 
\frac{|S_2|}{|S_1|} \leq c_2 \frac{F(a)}{F(a-2)}  \left( \frac{F(a-1)}{F(a)}   \right)^{\!\!\!k}.
\end{equation*}
Hence
\begin{equation} \label{s21}
\frac{|S_2|}{|S_1|} \to 0 \textrm{ as } k \to \infty.
\end{equation}

As $S_3$ contains independent sets in $T_y$ that do not contain $c$ but intersect~$R$, we must count the number of independent sets in $T_y \setminus \{ c \}$ --- a graph that contains one copy of $P_{a-2}$, $k-1$ copies of $P_{a}$ and $n-1$ disjoint $(k,a)$-claws --- that contain the root of at least one of the $(k,a)$-claws. 
Thus we have
\[|S_3| =F(a-2)F(a)^{k-1} \sum _{i=1}^{n-1} \binom{n-1} {i} F(a-1)^{ik}F(a)^{(n-1-i)k}\]   
where the sum is the number of independent sets in $n-1$ disjoint $(k,a)$-claws that include at least one of the roots (by Lemma~\ref{kaclaws} with $b=1$).  Reasoning as before, we find
\begin{eqnarray*}
|S_3|& \leq  & F(a)^{k} \sum _{i=1}^{n-1} \binom{n-1}{\lfloor (n-1)/2 \rfloor} F(a-1)^{ik}F(a)^{(n-1-i)k} \\
& = &  F(a)^{nk} \sum _{i=1}^{n-1} \binom{n-1}{\lfloor (n-1)/2 \rfloor} \left(\frac{F(a-1)}{F(a)}\right)^{\!\!\!ik} \\
& \leq &  F(a)^{nk} \sum _{i=1}^{n-1} \binom{n-1}{\lfloor (n-1)/2 \rfloor} \left(\frac{F(a-1)}{F(a)}\right)^{\!\!\!k}.
\end{eqnarray*}
Thus we obtain
\begin{equation} \label{s3}
|S_3| \leq c_3 F(a-1)^{k} F(a)^{(n-1)k}   
\end{equation}
where $c_3$ is a constant that does not depend on $k$.  Comparing (\ref{s2}) and (\ref{s3}), we see that the same arguments used to obtain (\ref{ixs2}) and (\ref{s21}) give us
\begin{eqnarray}
\frac{|S_3|}{I'(x)} & \to 0 & \textrm{ as } k \to \infty, \label{ixs3} \\
\frac{|S_3|}{|S_1|} &\to 0& \textrm{ as } k \to \infty. \label{s31}
\end{eqnarray}
We combine (\ref{ixs1}), (\ref{ixs2}) and (\ref{ixs3}) to find
\begin{equation*}
\frac{I'(y)+I(x,y)}{I'(x)} = \frac{2|S_1|}{I'(x)} + \frac{|S_2|}{I'(x)} +\frac{|S_3|}{I'(x)} \to \frac{2F(a-2)}{F(a-1)} \textrm{ as } k \to \infty. \label{iys1}
\end{equation*}
And from (\ref{s21}) and (\ref{s31}), we have
\begin{equation*}
\frac{I'(y)+I(x,y)}{I(x,y)} = \frac{2|S_1|}{|S_1|} + \frac{|S_2|}{|S_1|} +\frac{|S_3|}{|S_1|} \to 2 \textrm{ as } k \to \infty. \label{iys1}
\end{equation*}

\noindent Using these last two observations, we can complete the proof:
\begin{eqnarray*}
\frac{I(x)}{I(y)} & = & \frac{I'(x)}{I'(y)+I(x,y)} + \frac{I(x,y)}{I'(y)+I(x,y)}  \\
& \to & \frac{F(a-1)}{2F(a-2)} + \frac{1}{2} = \frac{F(a-1)+F(a-2)}{2F(a-2)} \textrm{ as } k \to \infty. \label{iys1}
\end{eqnarray*}
\end{proof}  

\begin{proof}[Proof of Theorem~\ref{centrethm}]
In the family of trees $T^{n,k,d-1}$, the root vertex has degree $n$ and is at distance $d$ from every leaf.  By Lemma~\ref{lm:ce}, for sufficiently large $k$, 
\[
|\mathcal{I}_{\mbox{\scriptsize root}} (T^{n,k,d-1})| >
|\mathcal{I}_{\mbox{\scriptsize leaf}} (T^{n,k,d-1})|.
\]
As $\mathcal{I}_{\mbox{\scriptsize root}} (T^{n,k,d-1})$ and $\mathcal{I}_{\mbox{\scriptsize leaf}} (T^{n,k,d-1})$ are each the (disjoint) union, over all positive integers $r$, of $r$-stars centred at, respectively, the root and the leaf, there must be some $r$ for which the $r$-star centred at the root is strictly larger than that centred at the leaf.

The theorem will follow if we can show that for any positive integer $r$, for any tree $T^{n,k,d-1}$, and for any vertex $w$ that is neither the root nor a leaf, the $r$-star centred at $w$ is no larger than a $r$-star centred at a leaf.

Let $x$ be the root of $T^{n,k,d-1}$.  Let $C$ be the component of $T^{n,k,d-1} \setminus \{x\}$ that contains $w$ and let $D$ be the union of the other components.  Noting that $C$ is an elongated claw, let $y$ be the leaf of the limb that contains $w$ (or any limb if $w$ is the root of $C$). Let $R(w)$ and $R(y)$ be the number of independents sets of $T^{n,k,d-1}$ of size $r$ that include $x$ and contain, respectively, $w$ and $y$.  Similarly let $S(w)$ and $S(y)$ be the number of independent sets that contain, respectively, $w$ and $y$, but that do not include $x$.  For $v \in \{w,y\}$, we can write 
\[
S(v) = \sum_{i=0}^r \mathcal{I}^{(i)}_v(C) \times \mbox{number of independent sets of size $r-i$ in $D$}. 
\]
By Lemma~\ref{starlm}, $\mathcal{I}^{(i)}_y(C) \geq \mathcal{I}^{(i)}_w(C)$ for all $i$, and, as the second term in the product does not depend on $v$, we have that $S(y) \geq S(w)$.  

Now we consider independent sets of size $r$ that do contain $x$.  These can be bijectively matched with independent sets of size $r-1$ in $T^{n,k,d-1} \setminus (N(x) \cup \{x\})$; this graph contains $nk$ copies of $P_{d-1}$.  If $w$ is the root of $C$ it is not in this graph, and in this case $R(w)=0$ and we are done.  In all other cases, $w$ and $y$ belong to the same copy of $P_{d-1}$ which we denote $P$.  The union of the other paths, we denote $Q$.
For $v \in \{w,y\}$, 
\[
R(v) = \sum_{i=0}^r \mathcal{I}^{(i)}_v(P) \times \mbox{number of independent sets of size $r-i$ in $Q$}. 
\]
As $y$ is a vertex of degree 1 in $P$, by Lemma~\ref{fa}, $v=y$ maximises $\mathcal{I}^{(i)}_v(P)$.  Again, the second term does not depend on $v$ so we have that $R(y) \geq R(w)$ and the proof is complete.
\end{proof}

\subsection{Further Counterexamples}

Let us finally remark that one can define a much broader class of trees with the property that the largest $r$-stars are not centred at leaves (which therefore provides further counterexamples to Conjecture~\ref{c2}) by, for example, taking copies of $T^{n,k,a}$ and adding an additional root vertex joined to the root of each $T^{n,k,a}$ --- and this process of duplicating and joining (via a new root) can be repeated ad infinitum.  Moreover, it does not, in fact, matter which trees are used to initialize this process: if the number of copies made is large enough a graph where the largest $r$-stars are not centred at leaves is obtained.  This does not ultimately add anything to the result stated in Theorem~\ref{centrethm} so we omit further details.

\bibliography{bibliography}{}
\bibliographystyle{abbrv}
\end{document}